\newtheorem{theorem}{Theorem}[section]
\newtheorem{corollary}[theorem]{Corollary}
\newtheorem{definition}[theorem]{Definition}
\newtheorem{lemma}[theorem]{Lemma}
\newtheorem{proposition}[theorem]{Proposition}
\newtheorem{remark}[theorem]{Remark}
\begin{document}

\title{The Rabinowitz-Floer homology for a class of semilinear problems and applications}

\author{Ali Maalaoui$^{(1)}$ \& Vittorio Martino$^{(2)}$}
\addtocounter{footnote}{1}
\footnotetext{Department of mathematics and natural sciences, American University of Ras Al Khaimah, PO Box 10021, Ras Al Khaimah, UAE. E-mail address:
{\tt{ali.maalaoui@aurak.ae}}}
\addtocounter{footnote}{1}
\footnotetext{Dipartimento di Matematica, Universit\`a di Bologna,
piazza di Porta S.Donato 5, 40127 Bologna, Italy. E-mail address:
{\tt{vittorio.martino3@unibo.it}}}

\date{}
\maketitle

\vspace{5mm}
\begin{center}
    \emph{Dedicated to Abbas Bahri on his sixtieth birthday.}
\end{center}

\vspace{5mm}

{\noindent\bf Abstract} {\small In this paper, we construct a Rabinowitz-Floer type homology for a class of non-linear problems having a \emph{starshaped} potential; we consider some equivariant cases as well. We give an explicit computation of the homology and we apply it to obtain results of existence and multiplicity of solutions for several model equations.}

\vspace{5mm}

\vspace{5mm}
\tableofcontents

\noindent


\section{Introduction and main results}

\noindent
In this paper we will consider a class of semilinear functionals defined on some Hilbert subspaces as space of variations and under suitable assumptions on the nonlinearity, we will define a Morse-type complex following the idea in the Rabinowitz-Floer approach; if in addition the functionals are $S^1$- or $\mathbb{Z}_2$-equivariant, then we will have further reductions in the homology computation.

\noindent
Thus, let $E$ be a Hilbert space and let $H\subset E$  be a dense subspace compactly embedded in $E$. We consider a linear operator
$$L:H\longrightarrow E$$
invertible and auto-adjoint. Hence $L$ will have a basis of eigenfunctions $\{\varphi_{i}\}_{i\in \mathbb{Z}}$
$$L(\varphi_i)=\lambda_i \varphi_i$$
with the convention that if $\lambda_i>0$ then $i>0$. This allows us to define the unbounded operator $|L|^{\frac{1}{2}}$ in the following way; if
$$u=\sum_{i\in \mathbb{Z}}a_{i}\varphi_{i}$$
then
$$L(u)=\sum_{i\in \mathbb{Z}}\lambda_{i}a_{i}\varphi_{i}$$
and therefore
$$|L|^{\frac{1}{2}}u=\sum _{i\in \mathbb{Z}}|\lambda_{i}|^{\frac{1}{2}}a_{i}\varphi_{i}$$
Now we denote by $\langle \cdot,\cdot \rangle$ the inner product in $E$, and we define then the inner product of $H$ as follows $$\langle u,v \rangle_{H} = \langle |L|^{\frac{1}{2}}u,|L|^{\frac{1}{2}}v \rangle$$
We obtain also the decomposition
$$H=H^{+}\oplus H^{-} $$
where
$$H^{-}=\overline{span\{\varphi_{i},i<0 \}}, \qquad H^{+}=\overline{span\{\varphi_{i},i>0 \}}$$
We will write
$$u=u^{+}+u^{-}, \qquad \forall \; u\in H$$
according to the previous splitting.
We explicitly note that
$$L(u^{+} + u^{-}) = |L|(u^{+} - u^{-})$$
Now we consider the following functional defined on $\mathcal{H}=H\times \mathbb{R}$ by
$$I(u,\lambda)=\frac{1}{2}\langle Lu,u \rangle -\lambda F(u)$$
where $F:E\rightarrow\mathbb{R}$ is a $C^{2}$ function with the following properties :

\begin{itemize}
\item[(F1)]  $|L|^{-1}\nabla F$ is compact
\item[(F2)] the set $S=\{u\in E \; s.t. \;F(u)=0\}$ bounds a strictly starshaped bounded domain in $E$ \\
\end{itemize}

\noindent
In the following, if $(F2)$ holds, we will simply say that $S$ is a bounded strictly starshaped surface and $F$ is a starshaped potential. Our main result is the following
\begin{theorem}
If $F$ satisfies the hypotheses $(F1)$ and $(F2)$ then the Rabinowitz-Floer homology $H_{*}(I)$ is well defined. Moreover
$$H_{*}(I)=0$$
If in addition $I$ is $S^{1}$-equivariant or $\mathbb{Z}_2$- equivariant respectively, then we have
$$H_{*}^{S^{1}}(I)=H_{*}(\mathbb{C}P^{\infty})=\left\{
\begin{array}{ll}
\mathbb{Z}_2 & \mbox{if $*$ is even}\\
\\
0 & \mbox{otherwise}\\
\end{array}
\right.
$$
and
$$H_{*}^{\mathbb{Z}_2}(I)=\mathbb{Z}_2$$
respectively.
\end{theorem}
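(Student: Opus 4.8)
The plan is to realize $H_*(I)$ as the homology of a Floer-type complex generated by the critical points of $I$ and then to compute it by a deformation argument. First I would identify the critical set: $(u,\lambda)$ is critical for $I$ exactly when $F(u)=0$ and $Lu=\lambda\nabla F(u)$, that is, when $u$ lies on the starshaped surface $S$ and solves the ``Rabinowitz equation'' with multiplier $\lambda$. By $(F1)$ the Hessian of $I$ at any point is a compact perturbation of the fixed invertible self-adjoint operator $L$ (plus the trivial $\mathbb{R}$-direction), so the spectral flow between two critical points is a well-defined integer and provides a relative $\mathbb{Z}$-grading. The differential counts, over $\mathbb{Z}_2$, the rigid negative gradient trajectories of $I$ with respect to a metric adapted to the splitting $H=H^+\oplus H^-$; this is the standard strongly indefinite set-up, in which one works with Galerkin approximations compatible with the eigenspaces of $L$, a procedure legitimate precisely because $|L|^{-1}\nabla F$ is compact. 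That $\partial^2=0$ and that $H_*(I)$ is independent of the metric and of the generic perturbation used for transversality then follow from the usual compactness and gluing analysis.

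The crucial analytic input is the compactness of the trajectory spaces. By $(F1)$ the whole problem is a compact perturbation of a linear one, so the only way compactness can fail is through the escape of $(u,\lambda)$ to infinity along a Palais--Smale sequence or a broken trajectory, and here $(F2)$ is decisive. Strict starshapedness of the bounded domain $D$ bounded by $S$ produces a radial (Liouville-type) vector field $X$ centred at the star-point with $\langle\nabla F(u),X(u)\rangle$ bounded below by a positive constant in a neighbourhood of $S$; testing $Lu-\lambda\nabla F(u)\to 0$ against $X(u)$, once $\|u\|$ has been controlled using that $\{F\le 0\}=\overline{D}$ is bounded and that the perturbation is compact, yields a uniform bound on $\lambda$. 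This is the classical Rabinowitz a priori estimate. The same estimate, imposed uniformly along a homotopy through starshaped potentials, gives the invariance of $H_*(I)$ under such homotopies.

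To prove $H_*(I)=0$ I would exploit this invariance to reduce to a convenient model — say the potential whose zero set is the unit sphere of $E$ — and then compute in one of two equivalent ways: (i) work out the Morse--Bott complex of the model directly, where the critical set is made of the eigendirections of $L$ rescaled onto $S$, and check that the differential between consecutive eigenvalue levels telescopes and kills every generator; or (ii) place $H_*(I)$ into a Cieliebak--Frauenfelder type exact sequence relating it to the homology and cohomology of the ``filling'', here the domain $D\subset H$ with $\partial D=S$, and use that $D$ is starshaped, hence contractible, so a contractible filling forces the Rabinowitz--Floer homology to vanish. I expect the main obstacle to lie exactly where $(F2)$ is used: keeping the moduli spaces compact and the spectral-flow bounds uniform throughout the deformation, since a naive homotopy such as translating $F$ by a constant degenerates $(F2)$ at the level where $S$ shrinks to a point. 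The deformation must therefore be organised so that $S$ always bounds a genuine starshaped domain, and the escape of orbits to infinity at the end of the deformation must be turned into the vanishing statement by a cofinality/telescope argument rather than by a single homotopy.

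For the equivariant statements I would carry the $S^1$- (respectively $\mathbb{Z}_2$-) action through the entire construction and build the Borel equivariant Floer complex. On the non-constant Rabinowitz orbits the action is locally free, and working over $\mathbb{Z}_2$ one sees that these generators cancel in the equivariant complex just as their non-equivariant counterparts do. The fixed locus consists of the Rabinowitz orbits lying in $H^{S^1}\times\mathbb{R}$ (respectively $H^{\mathbb{Z}_2}\times\mathbb{R}$), which again bound a starshaped, hence equivariantly contractible, domain centred at the fixed star-point. The localization/Gysin long exact sequence then identifies $H_*^{S^1}(I)$ with $H_*^{S^1}(\mathrm{pt})=H_*(BS^1)=H_*(\mathbb{C}P^\infty;\mathbb{Z}_2)$, which is $\mathbb{Z}_2$ in even degrees and $0$ otherwise, and identifies $H_*^{\mathbb{Z}_2}(I)$ with $H_*^{\mathbb{Z}_2}(\mathrm{pt})=H_*(B\mathbb{Z}_2;\mathbb{Z}_2)=\mathbb{Z}_2$ in every degree, which is precisely the assertion.
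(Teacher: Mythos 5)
Your non-equivariant argument is essentially the paper's: the a priori bound on $\lambda$ from strict starshapedness (testing against the position/radial direction once $F(u)\to 0$ forces $\langle\nabla F(u),u\rangle\geq a>0$), the relative (spectral-flow) grading, compactness and gluing of the moduli spaces for the $|L|^{-1}$-regularized flow, continuation invariance, and then the computation for the model potential $F_0(u)=\tfrac12(\|u\|^2-1)$, where the flow decouples in the eigenbasis of $L$, only nearest-index connections survive, and $\partial$ is an isomorphism from even to odd generators, so $H_*(I_0)=0$. Your worry about the homotopy degenerating $(F2)$ is handled in the paper by a cut-off trick: one modifies $F$ outside a large ball so that it agrees with $F_0$ there, keeping the same zero set $S$ and making the difference of the two potentials bounded, and then applies the continuation theorem in finitely many small steps. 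Your alternative route (ii), a Cieliebak--Frauenfelder-type exact sequence with the ``filling'' $D$ and the claim that a contractible filling forces vanishing, is not available in this abstract semilinear setting and is not the right mechanism even heuristically (in the symplectic theory vanishing reflects displaceability, not contractibility of the filling); since your route (i) is the paper's proof, (ii) should simply be dropped.

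The genuine gap is in the equivariant part. The $S^1$-action $u\mapsto e^{i\theta}u$ (and the $\mathbb{Z}_2$-action $u\mapsto -u$) is \emph{free} on the critical set: every critical point has $u\in S$, while the fixed subspace is $\{0\}$ and $0\notin S$. So there is no fixed locus among the generators, and a ``localization/Gysin'' identification of $H_*^{S^1}(I)$ with $H_*^{S^1}(\mathrm{pt})=H_*(BS^1)$ has nothing to localize to; taken literally, localization over a free action would kill the localized theory rather than produce $H_*(\mathbb{C}P^\infty)$. Moreover, the non-equivariant answer already shows that this homology is not the (equivariant) homology of the contractible filling, since $H_*(I)=0\neq H_*(\mathrm{pt})$. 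What is actually needed, and what the paper does, is (a) a construction of the equivariant complex itself --- the paper breaks each critical $S^1$-orbit into a min/max pair by an equivariant-symmetry-breaking perturbation, defines $\partial_{S^1}$ by counting trajectories between the maxima, and proves $\partial_{S^1}^2=0$ by a cancellation forced by the free $S^1$-action (one cannot just invoke a Borel construction for a Floer-type complex without building it) --- and (b) a chain-level computation for the model: the critical orbits of $I_0$ are circles of eigenfunctions occurring once in each \emph{even} relative index, so the $S^1$-equivariant complex has one $\mathbb{Z}_2$ generator per even degree with a degree gap of two, the differential vanishes, and $H_*^{S^1}$ is $\mathbb{Z}_2$ in even degrees and $0$ otherwise; in the $\mathbb{Z}_2$-case the quotient complex has one generator in every degree and the counts to $z_k$ and $\bar z_k$ cancel mod $2$, giving $\mathbb{Z}_2$ in every degree. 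One must also check that all the perturbations (Morse/Morse--Smale) and continuation homotopies can be chosen equivariantly, which the paper notes explicitly. Without these ingredients your equivariant statements are asserted rather than proved.
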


\begin{remark}
From the proof of the result (see Section 6 also) we will show that in particular the homology we compute depends strongly on the set $S$ in the hypothesis $(F2)$, rather than the defining function $F$; therefore we could use the notation $H_{*}(S)$ as well.
\end{remark}

\vspace{5mm}

\noindent
{\bf Acknowledgement}
We explicitly thank Professor Takeshi Isobe for the fruitful discussions and the useful suggestions on the subject. The authors aknowledge the financial support of the Seed Grant of AURAK, No.: AAS/001/15, \emph{Extensions of the Rabinowitz-Floer Homology and Applications to more General PDEs}\\


\section{Relative index and Moduli space of trajectories}

\noindent
First of all, we note explicitly that critical points of $I$ satisfy the following equations:
\begin{equation}\label{cpeqs}
\left\{
\begin{array}{ll}
L u = \lambda \nabla F(u)\\
\\
F(u)=0\\
\end{array}
\right.
\end{equation}

\noindent
Moreover if we compute the Hessian of $I$ at a critical point $(\lambda,u)$, we get
$$Hess(I)(u,\lambda)=\left(\begin{array}{lc}
|L|^{-1}L -\lambda |L|^{-1} \nabla^{2} F(u) & -|L|^{-1}\nabla F(u)\\
\\
-|L|^{-1}\nabla F(u) & 0
\end{array}
\right)
$$
Hence the index and co-index of the critical point are infinite. So we need to introduce an alternative way of grading (as in \cite{AB2} for instance).
\begin{definition}
Consider two closed subspaces $V$ and $W$ of a Hilbert space $E$. We say that $V$ is a compact perturbation of $W$ if $P_{V}-P_{W}$ is a compact operator.
\end{definition}

\noindent
$P_{V}$ in the previous definition denote the orthogonal projection on $V$. Now, if $V$ is a compact perturbation of $W$, we can define the relative dimension as
$$dim(V,W)=dim(V\cap W^{\perp})-dim(V^{\perp}\cap W).$$
One can check that it is well defined and finite. Now if we have three subspaces $V$, $W$ and $U$ such that $V$ and $W$ are compact perturbations of $U$. Then $V$ is also a compact perturbation of $W$ and
$$dim(V,W)=dim(V,U)+dim(U,W).$$
Using this concept of relative dimension we can define a relative index as our grading.
\begin{definition}
We denote by $V^{-}(u,\lambda)$ the closure of the span of the eigenfunction of the Hessian of $I$ at a critical point $(u,\lambda)$, corresponding to negative eigenvalues.\\
The relative index is defined as
$$i_{rel}(u,\lambda)=dim(V^{-}(u,\lambda),H^{-}\times \mathbb{R})$$
\end{definition}
\begin{lemma}
If $I$ is Morse and $(F1)$ holds then the relative index is well defined for critical points of $I$.
\end{lemma}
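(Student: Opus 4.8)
The plan is to realize the Hessian $A:=\mathrm{Hess}(I)(u,\lambda)$ as a compact perturbation of a fixed invertible self-adjoint operator on $\mathcal H=H\times\mathbb R$ whose negative spectral subspace is exactly $H^-\times\mathbb R$, and then to apply the standard stability of negative spectral subspaces under compact perturbations.

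First I would split the operator displayed above as $A=A_0+K$, where
$$A_0=\left(\begin{array}{cc} |L|^{-1}L & 0\\ 0 & -1\end{array}\right),\qquad K=\left(\begin{array}{cc} -\lambda\,|L|^{-1}\nabla^2F(u) & -|L|^{-1}\nabla F(u)\\ -|L|^{-1}\nabla F(u) & 1\end{array}\right).$$
Both $A$ and $A_0$ are bounded and self-adjoint for the inner product of $\mathcal H$ (recall that $|L|^{-1}L=\mathrm{sign}(L)$ is the orthogonal reflection with $\pm1$-eigenspaces $H^{\pm}$), and $A_0$ is invertible with negative spectral subspace precisely $H^-\times\mathbb R$. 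The key point is that $K$ is compact: the entry in position $(2,2)$ has rank one, the two off-diagonal entries are rank one because $|L|^{-1}\nabla F(u)\in H$, and $|L|^{-1}\nabla^2F(u)$ is compact. This last fact is exactly where hypothesis $(F1)$ enters: $|L|^{-1}\nabla^2F(u)$ is the Fr\'echet differential at $u$ of the compact map $|L|^{-1}\nabla F$, and the differential of a $C^1$ compact map is a compact linear operator (its action on the unit ball is a uniform limit of the precompact sets $t^{-1}\big(|L|^{-1}\nabla F(u+t\,\cdot\,)-|L|^{-1}\nabla F(u)\big)$); equivalently, $|L|^{-1}\nabla^2F(u)$ factors as $H\hookrightarrow E\xrightarrow{\nabla^2F(u)}E\xrightarrow{|L|^{-1}}H$ through the compact embedding $H\hookrightarrow E$.

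Next, since $I$ is Morse, $A$ is invertible, so $0\notin\sigma(A)$; as $A$ and $A_0$ are bounded and self-adjoint, $\sigma(A)\cup\sigma(A_0)\subset(-\infty,-\delta]\cup[\delta,\infty)$ for some $\delta>0$, and $V^-(u,\lambda)=\chi_{(-\infty,0)}(A)\,\mathcal H$ is a closed subspace, namely the range of an orthogonal projection. Choose a continuous $g\colon\mathbb R\to\mathbb R$ with $g\equiv-1$ on $(-\infty,-\delta]$ and $g\equiv1$ on $[\delta,\infty)$; then $g(A)=\mathrm{Id}-2P_{V^-(u,\lambda)}$ and $g(A_0)=\mathrm{Id}-2P_{H^-\times\mathbb R}$. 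Because $A-A_0=K$ is compact, $p(A)-p(A_0)$ is compact for every polynomial $p$ (induct on the degree using $A^n-A_0^n=A^{n-1}(A-A_0)+(A^{n-1}-A_0^{n-1})A_0$), and approximating $g$ uniformly on a common interval containing both spectra by polynomials, together with the norm-closedness of the compact operators, yields that $g(A)-g(A_0)$ is compact. Hence $P_{V^-(u,\lambda)}-P_{H^-\times\mathbb R}$ is compact, i.e. $V^-(u,\lambda)$ is a compact perturbation of $H^-\times\mathbb R$ in the sense of the Definition, so $i_{rel}(u,\lambda)=\dim\big(V^-(u,\lambda),H^-\times\mathbb R\big)$ is finite and well defined.

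The routine part is the perturbation theory of spectral subspaces in the last paragraph; the step I expect to require the most care is the compactness of the second-order block $|L|^{-1}\nabla^2F(u)$, since it is there that $(F1)$ — together with the $C^2$ regularity of $F$ and the compactness of the embedding $H\hookrightarrow E$ — must be used, and one must invoke the correct functional-analytic statement about differentials of compact maps.
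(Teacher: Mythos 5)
Your proof is correct and follows essentially the same route as the paper: you write $\mathrm{Hess}(I)(u,\lambda)$ as a compact perturbation of a fixed diagonal reference operator whose negative spectral subspace is $H^-\times\mathbb{R}$, and conclude that $V^-(u,\lambda)$ is a compact perturbation of $\Gamma=H^-\times\mathbb{R}$. You merely make explicit two steps the paper leaves implicit --- the compactness of the block $|L|^{-1}\nabla^2F(u)$ via $(F1)$ and the functional-calculus argument showing that compactness of the difference of the operators forces compactness of the difference of the negative spectral projections --- and you adjust the sign of the $(2,2)$ entry of the reference operator so that its negative subspace is exactly $\Gamma$, which is harmless since the paper's choice differs from it only by a one-dimensional factor.
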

\begin{proof}
Let $\Gamma=H^{-}\times \mathbb{R}$, and $(u,\lambda)$ a critical point of $I$. The operator
$$v\mapsto Lv-\lambda \nabla ^{2} F(u)v$$
has discrete spectrum since $|L|^{-1}$ is a compact operator. Then $V^{-}(u,\lambda)$ is well defined and it is a compact perturbation of $\Gamma$. This follows from the fact that
$$\left( \begin{array}{cc}
|L|^{-1}L & 0 \\
0 & 1
\end{array}
\right) - \left( \begin{array}{cc}
|L|^{-1}L - \lambda |L|^{-1}\nabla^{2}F(u) & |L|^{-1}\nabla F(u) \\
|L|^{-1}\nabla F(u) & 0
\end{array}
\right),$$
is a compact operator.
\end{proof}

\noindent
We define now the moduli space of $\mathcal{H}$-gradient trajectories. Let us consider the following differential system:
\begin{equation}\label{GF}
\left\{\begin{array}{ll}
\displaystyle\frac{\partial u}{\partial t}=u^{-}-u^{+}+\lambda |L|^{-1}\nabla F(u)\\
\\
\displaystyle\frac{\partial \lambda}{\partial t}=F(u)
\end{array}
\right.
\end{equation}

\noindent
This system is in fact the descending gradient flow of our  functional in $\mathcal{H}=H\times \mathbb{R}$ and since the right hand side is smooth, then we have local existence of the flow.\\
We see that one is tempted to use a sort of ``$L^{2}$ gradient'' flow, to get a heat flow equation, but in this case the problem is ill-posed since the spectrum of $L$ is unbounded from below.\\
The type of gradient flow we defined before was used by A.Bahri in \cite{B}, where it appears that it has better properties than the classical heat flow and it is moreover positivity preserving.\\
Now, given two critical points $z_{0}=(u_{0},\lambda_{0})$ and $z_{1}=(u_{1},\lambda_{1})$ such that
$$I(z_{i})\in [a,b], \qquad \mbox{for} \quad i=0,1$$
we define the space of connecting orbits from $z_{0}$ to $z_{1}$ by
$$\mathbb{M}^{a,b}(z_{0},z_{1})=\left\{z= (u,\lambda)\in C^{1}(\mathbb{R},\mathcal{H})\; \text{satisfying (\ref{GF}) with } z(-\infty)=z_{0}\; , z(+\infty)=z_{1}\right\}$$
where we have denoted by
$$z(-\infty):=\lim_{t\rightarrow -\infty} z(t), \qquad z(+\infty):=\lim_{t\rightarrow +\infty} z(t)$$
The moduli space of trajectories is then defined by $$\mathcal{M}^{a,b}(z_{0},z_{1})=\mathbb{M}^{a,b}(z_{0},z_{1})/\mathbb{R}.$$
\begin{proposition}
Assume that $i_{rel}(z_{0})> i_{rel}(z_{1})$, then if $I$ is Morse-Smale,
$$dim\Big(\mathcal{M}^{a,b}(z_{0},z_{1})\Big)=i_{rel}(z_{0})-i_{rel}(z_{1})-1$$
\end{proposition}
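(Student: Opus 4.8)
The plan is to realize $\mathcal{M}^{a,b}(z_0,z_1)$ as the zero set of a Fredholm section and compute its index, following the standard Floer-theoretic recipe adapted to the relative grading. First I would set up the functional-analytic framework: fix a trajectory $z=(u,\lambda)\in\mathbb{M}^{a,b}(z_0,z_1)$ and consider the map
\[
\mathcal{F}(z)=\frac{\partial z}{\partial t}+\nabla I(z),
\]
where $\nabla I$ denotes the gradient appearing on the right-hand side of (\ref{GF}), acting between suitable weighted Sobolev spaces of paths (e.g. $W^{1,2}_\delta(\mathbb{R},\mathcal{H})$ with exponential weights handling the asymptotic convergence to $z_0,z_1$). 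The key point is that, because of hypothesis $(F1)$, the operator $|L|^{-1}\nabla F$ is compact, so the linearization $D\mathcal{F}(z)$ is a compact perturbation of the constant-coefficient operator $\partial_t + A_\infty$ where $A_\infty$ encodes the splitting $u^- - u^+$ together with the $\lambda$-direction. This makes $D\mathcal{F}(z)$ a Fredholm operator.

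Next I would identify the Fredholm index of $D\mathcal{F}(z)$ with the difference of relative indices. This is where the asymptotic operators enter: at $t\to\pm\infty$ the linearization converges to the Hessians $\mathrm{Hess}(I)(z_1)$ and $\mathrm{Hess}(I)(z_0)$ respectively. Because each $V^-(z_i)$ is a compact perturbation of $\Gamma=H^-\times\mathbb{R}$ (by the Lemma proved above), one has a well-defined spectral flow, and the standard spectral-flow$=$Fredholm-index theorem gives
\[
\mathrm{ind}\,D\mathcal{F}(z) = i_{rel}(z_0)-i_{rel}(z_1).
\]
Concretely, I would compare both asymptotic operators to the fixed reference splitting $\Gamma$ and use the additivity property $dim(V,W)=dim(V,U)+dim(U,W)$ quoted in the excerpt to telescope the relative dimensions into the stated difference. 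The Morse-Smale hypothesis guarantees that $0$ is a regular value of $\mathcal{F}$, i.e. $D\mathcal{F}(z)$ is surjective at every $z$ in the moduli space, so $\mathbb{M}^{a,b}(z_0,z_1)$ is a smooth manifold of dimension $\mathrm{ind}\,D\mathcal{F}(z)=i_{rel}(z_0)-i_{rel}(z_1)$. Finally, quotienting by the free $\mathbb{R}$-action of time-translation (free because $z_0\neq z_1$ forces non-constant trajectories) drops the dimension by one, yielding
\[
\dim\mathcal{M}^{a,b}(z_0,z_1)=i_{rel}(z_0)-i_{rel}(z_1)-1.
\]

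The main obstacle I anticipate is the Fredholm and transversality analysis in this particular setting, where the "gradient" flow (\ref{GF}) is not the $L^2$-gradient but the Bahri-type flow that splits $H$ into $H^\pm$. One must check that the weighted Sobolev setup is compatible with this flow, that the asymptotic operators are genuinely non-degenerate (using the Morse assumption and the form of $\mathrm{Hess}(I)$ displayed earlier, including the degenerate-looking zero block in the $\lambda\lambda$-entry), and that compactness of $|L|^{-1}\nabla F$ really does upgrade the constant-coefficient Fredholm property to the variable-coefficient operator along the whole trajectory. A secondary technical point is justifying the exponential decay of trajectories toward their endpoints, which is needed both for the weighted spaces to be the right ones and for the index identification; this should follow from the non-degeneracy of the asymptotic Hessians by a standard argument, but it needs to be stated.
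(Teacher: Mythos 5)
Your proposal follows essentially the same route as the paper: both realize $\mathbb{M}^{a,b}(z_0,z_1)$ as the zero set of $\mathcal{F}(z)=\frac{dz}{dt}+\nabla I(z)$, obtain the Fredholm property of the linearization by writing $Hess(I)$ as a hyperbolic time-independent operator plus a compact perturbation (using $(F1)$), identify the Fredholm index with $i_{rel}(z_0)-i_{rel}(z_1)$ via the relative-dimension formalism of Abbondandolo--Majer, invoke Morse--Smale for surjectivity, and divide by the free $\mathbb{R}$-action to lose one dimension. Your use of exponentially weighted Sobolev spaces instead of the paper's $C^1/C^0$ path spaces is only a minor technical variant, so the argument is correct and matches the paper's proof.
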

\begin{proof}
We first note that $\mathbb{M}^{a,b}(z_{0},z_{1})=\mathcal{F}^{-1}(0)$ where
$$\mathcal{F}:C^{1}(\mathbb{R},\mathcal{H})\mapsto \mathcal{Q}^{0}=C^{0}(\mathbb{R},\mathcal{H})$$
is defined by
$$\mathcal{F}(z)=\frac{dz}{dt}+\nabla I(z)$$
We will use the implicit function theorem to prove our result: we need to show that the linearized operator of $\mathcal{F}$ is Fredholm and onto. The linearized operator corresponds to
$$\partial \mathcal{F}(z)=\frac{d}{dt}+Hess(I(z))$$
and this is a linear differential equation in the Banach space $\mathcal{H}$ (see \cite{AB1}). In order to show that it is Fredholm, we first notice that
$$Hess(I(z))=\left( \begin{array}{cc}
|L|^{-1}L & 0\\
0 & 1
\end{array}
\right) +\left( \begin{array}{cc}
-\lambda |L|^{-1}\nabla^{2}F(u) & -|L|^{-1}\nabla F(u)\\
-|L|^{-1}\nabla F(u)& -1
\end{array}
\right).
$$
Now, the operator
$$\left( \begin{array}{cc}
|L|^{-1}L & 0\\
0 & 1
\end{array}
\right)$$
is time independent and hyperbolic, while the operator
$$\left( \begin{array}{cc}
-\lambda |L|^{-1}\nabla^{2} F(u) & -|L|^{-1}\nabla F(u)\\
-|L|^{-1}\nabla F(u) & -1
\end{array}
\right) $$
is compact. Hence we have that $\partial\mathcal{F}$ is a Fredholm operator with index $$ind(\partial\mathcal{F}(z))=dim(V^{-}(\mathcal{F}(z_{0}),V^{-}(\mathcal{F}(z_{1}))$$
$$=dim(V^{-}(\mathcal{F}(z_{0}),\Gamma)+dim(\Gamma,V^{-}(\mathcal{F}(z_{1}))$$
$$=i_{rel}(z_{0})-i_{rel}(z_{1}).$$
Moreover, from \cite{AB1}, we have also that $\partial\mathcal{F}(z)$ is onto if and only if the intersection is transverse.\\
To finish the proof now, it is enough to notice that the action of $\mathbb{R}$ is free and hence we can mod out by that action to get the desired result.
\end{proof}


\section{Compactness}

\subsection{Palais-Smale condition and compactification of the moduli spaces}
We recall that a functional $I$ is said to satisfies the Palais-Smale condition (PS), at the level $c$, if every sequence $(z_{k})$ such that
$$I(z_{k})\longrightarrow c$$
and
$$\nabla I(z_{k})\longrightarrow 0$$
as $k\rightarrow\infty$, has a convergent subsequence. We will say that $I$ satisfies (PS) if the previous condition is satisfied for all $c\in \mathbb{R}$.\\


\begin{proposition}
Under the assumption $(F1)$ and $(F2)$, $I$ satisfies the (PS) condition.
\end{proposition}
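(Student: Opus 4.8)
The plan is to verify the (PS) condition directly from the definition, exploiting the structure of $I(u,\lambda)=\frac12\langle Lu,u\rangle-\lambda F(u)$ together with the compactness in (F1) and the starshapedness in (F2). Suppose $(z_k)=(u_k,\lambda_k)$ is a sequence with $I(z_k)\to c$ and $\nabla I(z_k)\to 0$. Writing out the gradient in the $\mathcal{H}=H\times\mathbb{R}$ metric, the condition $\nabla I(z_k)\to 0$ reads $u_k^+-u_k^--\lambda_k|L|^{-1}\nabla F(u_k)\to 0$ in $H$ and $F(u_k)\to 0$ in $\mathbb{R}$. The first of these equations is the candidate for extracting convergence: the term $|L|^{-1}\nabla F(u_k)$ is the image under a compact operator, so once we know $(u_k)$ is bounded in $H$ (hence, by the compact embedding $H\hookrightarrow E$, precompact in $E$, hence $|L|^{-1}\nabla F(u_k)$ is precompact in $H$ by (F1)), and once we know $(\lambda_k)$ is bounded, a subsequence of $\lambda_k|L|^{-1}\nabla F(u_k)$ converges in $H$, and then $u_k^+-u_k^-$ converges in $H$; since the splitting $H=H^+\oplus H^-$ is orthogonal, both $u_k^+$ and $u_k^-$ converge, so $u_k$ converges in $H$. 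Thus the whole proof reduces to two a priori bounds: $(u_k)$ bounded in $H$ and $(\lambda_k)$ bounded in $\mathbb{R}$.

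For the bound on $(u_k)$, the natural device is the starshapedness hypothesis (F2). Because $S=\{F=0\}$ bounds a strictly starshaped bounded domain about the origin, there is a positive lower bound on $\langle\nabla F(u),u\rangle$ along $S$, and more usefully a coercivity/transversality estimate of the type $\langle\nabla F(u),u\rangle\ge c_1|F(u)|-c_2$ or a comparable Clark–Ekeland–type inequality valid on all of $E$; equivalently, $F$ together with the radial derivative controls $\|u\|_E$. I would pair this with the relation coming from the critical-point-like equations: from the $\partial_u$ part of $\nabla I(z_k)\to 0$ tested against $u_k$ we get $\langle Lu_k,u_k\rangle-\lambda_k\langle\nabla F(u_k),u_k\rangle=o(\|u_k\|_H)$, and combined with $I(z_k)\to c$, i.e. $\frac12\langle Lu_k,u_k\rangle-\lambda_k F(u_k)=c+o(1)$ and $F(u_k)\to 0$, we eliminate $\langle Lu_k,u_k\rangle$ to obtain $\lambda_k\big(\langle\nabla F(u_k),u_k\rangle-2F(u_k)\big)=2c+o(1)+o(\|u_k\|_H)$. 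The strict starshapedness makes $\langle\nabla F(u),u\rangle-2F(u)$ bounded away from $0$ on the relevant region (this is exactly the ``strictly starshaped'' quantitative content), which simultaneously forces $|\lambda_k|$ to be bounded and, feeding back, forces $\langle Lu_k,u_k\rangle$ to be bounded; since $|\langle Lu,u\rangle|$ is comparable to $\|u\|_H^2$ on $H=H^+\oplus H^-$ (indeed $\langle Lu,u\rangle=\|u^+\|_H^2-\|u^-\|_H^2$), one still needs to separately control the indefinite cancellation — but that is handled by going back to $u_k^+-u_k^--\lambda_k|L|^{-1}\nabla F(u_k)\to 0$ tested against $u_k^+$ and $u_k^-$ separately, which, with $|\lambda_k|$ now bounded and $|L|^{-1}\nabla F(u_k)$ controlled by $\|u_k\|_E$, yields $\|u_k^\pm\|_H^2\le C(1+\|u_k\|_E\|u_k^\pm\|_H)$ and hence a bound on $\|u_k\|_H$ after using $\|u_k\|_E\le C\|u_k\|_H$ is too weak; instead one uses that $|L|^{-1}\nabla F(u_k)$ is actually precompact once $u_k$ is bounded in $E$, and that boundedness in $E$ comes from $F$ being defined on $E$ together with (F2) bounding the level set — so the argument should be organized to get $E$-boundedness of $u_k$ first from (F2), then $\lambda_k$-boundedness, then $H$-boundedness.

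Concretely, the order of steps I would carry out is: (i) translate $\nabla I(z_k)\to 0$ and $I(z_k)\to c$ into the two scalar/vector asymptotic identities above; (ii) use (F2) — the bounded strictly starshaped domain — to show that any sequence with $F(u_k)\to 0$ and the equation $Lu_k=\lambda_k\nabla F(u_k)+o(1)$ must stay bounded in $E$, the key being that the Minkowski-gauge function of the starshaped domain controls $\|u_k\|_E$ and that its interaction with $\nabla F$ is nondegenerate; (iii) with $\|u_k\|_E$ bounded, invoke (F1) to get $|L|^{-1}\nabla F(u_k)$ precompact in $H$ and $\langle\nabla F(u_k),u_k\rangle$ bounded; (iv) use the eliminated identity $\lambda_k\big(\langle\nabla F(u_k),u_k\rangle-2F(u_k)\big)=2c+o(1)$ plus the strict-starshapedness positivity of $\langle\nabla F(u),u\rangle-2F(u)$ on bounded sets to bound $|\lambda_k|$; (v) conclude $\lambda_k|L|^{-1}\nabla F(u_k)$ has a convergent subsequence in $H$, hence so does $u_k^+-u_k^-$, hence (orthogonality) so do $u_k^+$ and $u_k^-$, giving a convergent subsequence of $(u_k,\lambda_k)$ in $\mathcal{H}$. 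The main obstacle is step (ii): extracting the $E$-boundedness of $(u_k)$ purely from the starshapedness of the zero level set of $F$ and the perturbed criticality equation, since $L$ is strongly indefinite and the functional $I$ has no coercivity of its own; this is precisely where the geometric hypothesis (F2) must be converted into a quantitative estimate (a uniform ``outward transversality'' of $\nabla F$ relative to the radial direction), and getting that estimate uniform on the possibly-unbounded-a-priori sequence is the delicate point. I would handle it by a contradiction/rescaling argument: if $\|u_k\|_E\to\infty$, set $v_k=u_k/\|u_k\|_E$, divide the equation by $\|u_k\|_E$, and derive that $v_k$ converges to a nonzero solution of a limiting homogeneous problem that is incompatible with the strict starshapedness of $S$, yielding the contradiction.
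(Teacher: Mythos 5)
Your plan follows the paper's proof in all essential respects: the same combination $\langle \nabla_u I(z_k),u_k\rangle-2I(z_k)$ isolating $\lambda_k\langle\nabla F(u_k),u_k\rangle$, the same use of strict starshapedness to get the uniform lower bound $\langle\nabla F(u_k),u_k\rangle\geq a>0$ once $F(u_k)\to 0$, the same testing of the gradient equation against $u_k^{+}$ and $u_k^{-}$ to convert the $E$-bound plus the $\lambda$-bound into an $H$-bound, and the same use of (F1) to upgrade weak to strong convergence (your ending, extracting a convergent subsequence of $u_k^{+}-u_k^{-}=\lambda_k|L|^{-1}\nabla F(u_k)+o(1)$, is a mild and equally valid variant of the paper's argument via convergence of $\|u_k^{\pm}\|_H$).

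The one place you genuinely diverge is your step (ii), and there your proposed fix is both unnecessary and shaky. The paper obtains $E$-boundedness directly: $F(u_k)\to 0$ together with (F2) (the set where $F$ is near zero lies in a bounded region, since $S$ bounds a bounded strictly starshaped domain) gives $\|u_k\|_E\leq C$ with no equation needed; this is exactly the ``Minkowski gauge'' remark you make yourself, and it is the intended quantitative content of (F2). Your alternative contradiction/rescaling argument would not go through as sketched: $\nabla F$ is not assumed homogeneous, so dividing by $\|u_k\|_E$ produces no limiting homogeneous problem; a bounded sequence $v_k$ in $E$ has only weak limits, which may vanish; and at that stage $\lambda_k$ is not yet known to be bounded, so you cannot pass to a limit in the rescaled equation at all. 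Drop the blow-up and take the bound straight from (F2). Finally, keep the error terms honest: the identity in your step (iv) carries an $\varepsilon_k\|u_k\|_H$ term (which you wrote correctly earlier and then silently discarded), so the bounds on $|\lambda_k|$ and $\|u_k\|_H$ are coupled; as in the paper, one adds the two inequalities $|\lambda_k|\leq C+\varepsilon_k(\|u_k\|_H+|\lambda_k|)$ and $\|u_k\|_H\leq C|\lambda_k|+\varepsilon_k$ and closes the loop using $\varepsilon_k\to 0$.
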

\begin{proof}

Let $z_{k}=(u_k,\lambda_k)$ be a (PS) sequence at the level $c$, that is
$$\left\{
\begin{array}{ll}
Lu_{k}-\lambda_{k}\nabla F(u_{k})=\varepsilon_{k}\\
\\
F(u_{k})=\varepsilon_{k}
\end{array}
\right.
$$
and
$$I(u_{k},\lambda_{k})=c+\varepsilon_{k}$$
Thus if we consider
$$\langle Lu_{k}-\lambda_{k}\nabla F(u_{k}),u_{k} \rangle -2I(u_{k},\lambda_{k})$$
we get
$$\lambda_{k} \langle \nabla F(u_{k}),u_{k} \rangle =2c+\varepsilon_{k}(\|u_{k}\|_{H}+2|\lambda_{k}|)$$
Since, $F(u_{k})\to 0$ and $S$ is strictly starshaped, there exists $a>0$ such that
$$\langle \nabla F(u_{k}),u_{k}\rangle >a.$$ Thus
\begin{equation}\label{lam}
|\lambda_{k}|=C+\varepsilon_{k}(\|u_{k}\|_{H}+|\lambda_{k}|)
\end{equation}
and we have
$$\langle Lu_{k}-\lambda_{k}\nabla F(u_{k}),u_{k}^{+} \rangle =\|u_{k}^{+}\|_{H}^{2}-\lambda_{k} \langle \nabla F(u_{k}),u_{k}^{+} \rangle$$
and
$$\langle Lu_{k}-\lambda_{k}\nabla F(u_{k}),u_{k}^{-} \rangle=-\|u_{k}^{-}\|_{H}^{2}-\lambda_{k} \langle \nabla F(u_{k}),u_{k}^{-} \rangle$$
Hence,
$$\|u_{k}\|_{H}^{2}=\lambda_{k} \langle \nabla F(u_{k}),u_{k}^{+}-u_{k}^{-} \rangle+\varepsilon_{k}\|u_{k}\|_{H}$$
therefore
$$\|u_{k}\|_{H}^{2}\leq C|\lambda_{k}| \|\nabla F(u_{k})\| |u_{k}|+\varepsilon_{k}\|u_{k}\|_{H}$$
Now, since $S$ is a bounded hypersurface, we have that $u_{k}$ is bounded in $E$ thus
\begin{equation}\label{u}
\|u_{k}\|_{H}\leq C|\lambda_{k}|+\varepsilon_{k}
\end{equation}
By adding (\ref{u}) and (\ref{lam}), one has
$$\|u_{k}\|_{H}+|\lambda_{k}|\leq C+C\varepsilon_{k}(\|u_{k}\|_{H}+|\lambda_{k}|)+\varepsilon_{k}$$
Thus both $u_{k}$ and $\lambda_{k}$ are bounded. Hence we can extract a weakly convergent subsequence of $u_{k}$ and a convergent subsequence of $\lambda_{k}$. That is $\lambda_{k}\to \lambda$ and $u_{k}\rightharpoonup u$.
Now notice since $|L|^{-1} \nabla F$ is compact we have that
$$\|u_{k}^{+}\|_{H}^{2}\to \lambda \langle \nabla F(u),u^{+} \rangle$$
similarly
$$\|u_{k}^{-}\|_{H}^{2}\to -\lambda \langle \nabla F(u),u^{-} \rangle$$
But again,
$$\langle Lu_{k}-\lambda_{k}\nabla F(u_{k}),u^{+} \rangle \to \|u^{+}\|_{H}^{2}-\lambda \langle \nabla F(u),u^{+} \rangle$$
and
$$\langle Lu_{k}-\lambda_{k}\nabla F(u_{k}),u^{-}\rangle \to -\|u^{-}\|_{H}^{2}-\lambda \langle \nabla F(u),u^{-} \rangle$$
Combining both we get
$$\|u\|_{H}^{2}=\lambda \langle \nabla F(u),u^{+}-u^{-} \rangle$$
Hence we have the convergence in norm of $u_{k}$ and thus (PS) holds.

\end{proof}

\noindent
In fact following the previous proof, we have proved the following
\begin{lemma}\label{lemmaestimate}
If
$$\|\nabla I(u,\lambda)\|\leq \varepsilon$$
and
$$|I(u,\lambda)|\leq M$$
for some $\varepsilon, M >0$, then there exists $C=C(M)>0$ such that
$$|\lambda|\leq C$$
and
$$\|u\|_{H}\leq C.$$
\end{lemma}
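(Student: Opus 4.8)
\noindent The plan is to repeat, for a single pair $z=(u,\lambda)$, the chain of estimates in the proof of the preceding Proposition, being careful this time about how the constants depend on $M$ and on the threshold $\varepsilon$. Recall that in the product metric of $\mathcal{H}=H\times\mathbb{R}$ the gradient of $I$ has components
$$\nabla I(u,\lambda)=\Big(\,|L|^{-1}\big(Lu-\lambda\nabla F(u)\big)\,,\,-F(u)\,\Big),$$
so that, using $|L|^{-1}Lu=u^{+}-u^{-}$, the hypothesis $\|\nabla I(z)\|\le\varepsilon$ yields at once $|F(u)|\le\varepsilon$ and $\big\|\,u^{+}-u^{-}-\lambda|L|^{-1}\nabla F(u)\,\big\|_{H}\le\varepsilon$. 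From $|F(u)|\le\varepsilon$ and hypothesis $(F2)$ --- $S=\{F=0\}$ is a compact hypersurface bounding a bounded strictly starshaped domain --- I would first record that, provided $\varepsilon$ is small enough, $u$ lies in a fixed bounded neighbourhood $\mathcal{U}$ of $S$; hence $\|u\|_{E}\le c_{0}$ and $\|\nabla F(u)\|_{E}\le c_{1}$ with $c_{0},c_{1}$ independent of $z$, and by strict starshapedness together with the compactness of $S$, $\langle\nabla F(u),u\rangle\ge a>0$ on $\mathcal{U}$.

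Next I would perform the two pairings exactly as in the Proposition. Testing the first component of $\nabla I(z)$ against $u$ in $H$ gives $|\langle Lu-\lambda\nabla F(u),u\rangle|\le\varepsilon\|u\|_{H}$, and subtracting $2I(z)$ from $\langle Lu,u\rangle-\lambda\langle\nabla F(u),u\rangle$ yields
$$\lambda\langle\nabla F(u),u\rangle=2I(z)+2\lambda F(u)+\theta,\qquad |\theta|\le\varepsilon\|u\|_{H};$$
combining with $\langle\nabla F(u),u\rangle\ge a$, $|I(z)|\le M$ and $|F(u)|\le\varepsilon$ one gets $|\lambda|\le C(M)+\varepsilon\,C\big(\|u\|_{H}+|\lambda|\big)$. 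Testing the first component against $u^{+}$ and against $u^{-}$ separately and adding the two resulting identities gives $\|u\|_{H}^{2}=\lambda\langle\nabla F(u),u^{+}-u^{-}\rangle+\theta'$ with $|\theta'|\le\varepsilon\|u\|_{H}$, and since $\|u^{+}-u^{-}\|_{E}=\|u\|_{E}\le c_{0}$ we can bound the right-hand side by $c_{0}c_{1}|\lambda|+\varepsilon\|u\|_{H}$; an elementary analysis of this quadratic inequality then gives $\|u\|_{H}\le C|\lambda|+C$ once $\varepsilon\le\tfrac12$.

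Finally, substituting the bound for $\|u\|_{H}$ into the bound for $|\lambda|$ produces $\|u\|_{H}+|\lambda|\le C(M)+\varepsilon\,C\big(\|u\|_{H}+|\lambda|\big)$, and choosing $\varepsilon\le\varepsilon_{0}$ with $\varepsilon_{0}C\le\tfrac12$ (a threshold depending only on $F$, not on $M$) lets us absorb the last term and conclude $\|u\|_{H}+|\lambda|\le 2C(M)$, which is the assertion with $C=2C(M)$. The only step that is not bookkeeping --- and the one responsible for the restriction to small $\varepsilon$ --- is the opening claim that $\{|F|\le\varepsilon\}$ is a fixed bounded set on which $\langle\nabla F(u),u\rangle$ is bounded below by a positive constant; this replaces the ``$u_{k}$ bounded in $E$'' step of the Proposition and uses, besides $(F2)$, the implicit fact that $\nabla F$ does not vanish on $S$.
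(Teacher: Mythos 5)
Your proposal is correct and follows essentially the same route as the paper: the paper's own ``proof'' of this lemma is precisely the observation that the chain of estimates in the Palais--Smale proposition (pairing with $u$, subtracting $2I$, then pairing with $u^{+}$ and $u^{-}$, and absorbing the error terms) is quantitative in $\varepsilon$ and $M$. Your version merely makes explicit the smallness threshold on $\varepsilon$ and the use of $(F2)$ to get $\|u\|_{E}\le c_{0}$ and $\langle\nabla F(u),u\rangle\ge a>0$ near $S$, which the paper uses implicitly.
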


\noindent
\begin{proposition}
Under hypotheses $(F1)$ and $(F2)$, the flow lines between critical points (or more precisely the moduli space) are uniformly bounded.
\end{proposition}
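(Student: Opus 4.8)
The plan is to reduce the statement to the a priori estimate of Lemma~\ref{lemmaestimate}. Let $z(t)=(u(t),\lambda(t))$ be a flow line of \eqref{GF} connecting two critical points $z_0$ and $z_1$, so that $I(z_0),I(z_1)\in[a,b]$. Along any such trajectory the function $t\mapsto I(z(t))$ is monotone non-increasing, because
$$\frac{d}{dt}I(z(t))=\langle \nabla I(z(t)),\dot z(t)\rangle = -\|\nabla I(z(t))\|^2\leq 0,$$
where the norm is the one induced by the metric on $\mathcal H$ for which \eqref{GF} is the (negative) gradient flow. Hence $I(z(t))\in[I(z_1),I(z_0)]\subset[a,b]$ for all $t\in\mathbb R$, so there is a uniform bound $|I(z(t))|\leq M$ with $M=\max(|a|,|b|)$, independent of the particular trajectory.

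Next I would produce the uniform bound on the gradient. Integrating the identity above over $\mathbb R$ gives
$$\int_{-\infty}^{+\infty}\|\nabla I(z(t))\|^2\,dt = I(z_0)-I(z_1)\leq b-a,$$
so $\nabla I(z(t))$ is square-integrable in $t$; more simply, since $I$ is Morse and the endpoints are nondegenerate critical points, standard asymptotic analysis of the gradient flow near hyperbolic rest points shows $\|\nabla I(z(t))\|\to 0$ as $t\to\pm\infty$. In particular $\sup_{t}\|\nabla I(z(t))\|<\infty$; but for the boundedness statement it already suffices that for every $t$ one has $\|\nabla I(z(t))\|\leq \varepsilon$ for a fixed $\varepsilon$ — indeed, away from a compact time interval the gradient is as small as we like, and on that compact interval continuity plus the (PS) bound control it. In any case, Lemma~\ref{lemmaestimate}, applied pointwise in $t$ with this $\varepsilon$ and with $M$ as above, yields a constant $C=C(M)$ such that $|\lambda(t)|\leq C$ and $\|u(t)\|_H\leq C$ for all $t\in\mathbb R$, and $C$ depends only on $a,b$ and not on the trajectory. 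This is exactly the asserted uniform boundedness of the moduli space $\mathcal M^{a,b}(z_0,z_1)$.

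The one point that needs a little care — and which I expect to be the main technical obstacle — is making the application of Lemma~\ref{lemmaestimate} legitimate \emph{with a single $\varepsilon$ valid along the whole flow line}. The lemma is stated for a point $(u,\lambda)$ with $\|\nabla I(u,\lambda)\|\leq\varepsilon$; to invoke it at every $t$ we want a uniform gradient bound. This follows because a gradient trajectory between critical points at energy levels in $[a,b]$ has total energy drop at most $b-a$, and one shows (using the hyperbolicity of the linearization at the endpoints, as in the Fredholm analysis of the previous section, or alternatively a direct ODE argument on $\tfrac{d}{dt}\|\nabla I(z(t))\|^2$) that $\|\nabla I(z(t))\|$ is in fact bounded on all of $\mathbb R$ by a constant depending only on $a,b$; then the lemma applies verbatim with that constant as $\varepsilon$. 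Once this uniformity is in hand, the boundedness of $u$ in $H$ and of $\lambda$ in $\mathbb R$ along every connecting orbit, uniformly over the moduli space, is immediate.
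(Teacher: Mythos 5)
Your first paragraph (monotonicity of $I$ along the flow and the energy identity $\int_{-\infty}^{+\infty}\|\nabla I(z(t))\|^{2}dt\leq b-a$) is correct and is also the starting point of the paper's proof. The gap is in the next step: you apply Lemma~\ref{lemmaestimate} \emph{pointwise in $t$}, which requires $\|\nabla I(z(t))\|\leq\varepsilon$ at \emph{every} time, and this is simply not available. Along a connecting orbit the gradient is small only near the two asymptotic ends; in the middle of the trajectory it can be large, and neither the $L^{2}$-in-$t$ bound, nor convergence to critical points at $\pm\infty$, nor ``continuity plus (PS)'' gives a pointwise bound below the threshold $\varepsilon$. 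Your fallback --- first show $\sup_{t}\|\nabla I(z(t))\|\leq K$ with $K=K(a,b)$ and then ``apply the lemma verbatim with that constant as $\varepsilon$'' --- does not work for two reasons. First, it is circular: a uniform bound on $\|\nabla I(z(t))\|$ over the moduli space is essentially equivalent to the uniform bound on $(u,\lambda)$ you are trying to prove, since $\nabla I$ contains the terms $\lambda|L|^{-1}\nabla F(u)$ and $u^{-}-u^{+}$; hyperbolicity at the endpoints controls the tails of one fixed trajectory but gives nothing uniform in the interior. Second, Lemma~\ref{lemmaestimate} cannot be invoked with a large $\varepsilon$: its proof (the (PS)-type estimate $\|u\|_{H}+|\lambda|\leq C+C\varepsilon(\|u\|_{H}+|\lambda|)+\varepsilon$) only closes when $\varepsilon$ is below a fixed smallness threshold, so ``bounded gradient'' is not a substitute for ``small gradient''.

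The missing idea, which is how the paper argues, is to avoid applying the lemma at the time $s$ you care about and instead apply it at a nearby time where the gradient \emph{is} small, then transport the bound back. Concretely, define as in (\ref{tau}) the waiting time $\tau(s)$ until $\|\nabla I(z(s+\tau(s)))\|\leq\varepsilon$; the energy identity forces $\varepsilon^{2}\tau(s)\leq b-a$, so $\tau(s)\leq(b-a)/\varepsilon^{2}$ uniformly. At the time $s+\tau(s)$ Lemma~\ref{lemmaestimate} legitimately gives $|\lambda(s+\tau(s))|\leq C$ and $\|u(s+\tau(s))\|_{H}\leq C$, and then writing $\lambda(s)=\lambda(s+\tau(s))-\int_{s}^{s+\tau(s)}\lambda'(t)\,dt$ and using Cauchy--Schwarz together with $\int\|z'\|^{2}\leq b-a$ yields $|\lambda(s)|\leq C+(b-a)/\varepsilon$, and likewise for $\|u(s)\|_{H}$. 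This two-step argument (bounded waiting time plus integration of the flow equation) is what replaces your pointwise application of the lemma, and without it the proposal does not close.
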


\begin{proof}
Let
$$z(t)=(u(t),\lambda(t))$$
a flow line of the negative gradient of $I$, that is $z$ is a solution of (\ref{GF}). First of all we have
$$\int_{-\infty}^{+\infty} \|z'\|^{2}dt\leq b-a$$
Our first objective is to show the boundedness of the multiplier $\lambda$. So we consider the following function:

\begin{equation}\label{tau}
\tau(s)=\inf \{t\geq 0; ||\nabla I(z(t+s)||\geq \varepsilon\}
\end{equation}
where $\varepsilon$ is as in Lemma (\ref{lemmaestimate}). Therefore, we have
$$b-a\geq \int_{-\infty}^{+\infty} ||\nabla I(z(t))||^{2}dt$$
$$\geq \int_{s}^{s+\tau(s)}||\nabla I(z(t))||^{2}dt$$
$$ \geq \varepsilon^{2}\tau(s)$$
therefore
$$\tau(s)\leq \frac{b-a}{\varepsilon^{2}}$$
Now
$$\lambda(s)=\lambda(s+\tau(s))-\int_{s}^{s+\tau(s)}\lambda'(s)dt$$
But notice that by construction
$$||\nabla I(z(s+\tau(s))||\leq \varepsilon$$
and $$a\leq I(z(t)) \leq b$$
Hence from Lemma (\ref{lemmaestimate}), we have that
$$ |\lambda(s+\tau(s))|\leq C$$
So we get
$$|\lambda(s)|\leq C+\sqrt{b-a}\sqrt{\tau(s)}\leq C+\frac{b-a}{\varepsilon}$$
In a similar way, we get the boundedness of $\|u\|_{H}$. Indeed,

$$u(s)=u(s+\tau(s))-\int_{s}^{s+\tau(s)}u'(t)dt.$$
From Lemma (\ref{lemmaestimate}), we have
$$\|u(s+\tau(s))\|_{H}\leq C,$$
therefore,
$$\|u(s)\|_{H}\leq C+\sqrt{\tau(s)}\sqrt{b-a}.$$
Thus from the estimate on $\tau(s)$, we have
$$\|u(s)\|_{H}\leq C+\frac{b-a}{\varepsilon}.$$
\end{proof}

\begin{proposition}
There exists a space $\hat{H}$ that embeds compactly in $H$ such that any gradient flow-line is bounded in
$$\mathcal{\hat{H}}= \hat{H}\times \mathbb{R}$$
\end{proposition}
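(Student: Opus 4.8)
\noindent
The plan is to reduce the assertion to a uniform bound on the $u$--component of a flow line --- its $\lambda$--component being already controlled by the preceding proposition --- and then to exploit the following simple mechanism: along flow lines the nonlinear term $|L|^{-1}\nabla F(u)$ stays in a \emph{relatively compact} subset of $H$; a compact subset of a Hilbert space is bounded in some subspace that is compactly embedded in $H$; and the time--independent hyperbolic linear part of \eqref{GF} propagates this bound from $|L|^{-1}\nabla F(u)$ to $u$ itself through a Duhamel formula.

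\noindent
Concretely, fix $a<b$ and consider flow lines $z=(u,\lambda)$ of \eqref{GF} with $I(z(t))\in[a,b]$ for all $t$. Set
$$K=\overline{\Big\{\,\lambda(t)\,|L|^{-1}\nabla F\big(u(t)\big)\ :\ z=(u,\lambda)\ \text{as above},\ t\in\mathbb{R}\,\Big\}}^{\,H}.$$
By the preceding proposition together with Lemma \ref{lemmaestimate}, the set of all such $u(t)$ is bounded in $H$, hence in $E$, and the corresponding $\lambda(t)$ are bounded, all with constants depending only on $a,b$; since $|L|^{-1}\nabla F$ is compact by $(F1)$, it follows that $K$ is compact in $H$. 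Now, writing $u=\sum_i a_i\varphi_i$ with $a_i=\langle u,\varphi_i\rangle$, one has $\|u\|_H^2=\sum_i|\lambda_i|a_i^2$ with $|\lambda_i|\to\infty$ (as $H\hookrightarrow E$ is compact), and compactness of $K$ means that its tails $\sup_{h\in K}\sum_{|i|\ge N}|\lambda_i|a_i(h)^2$ tend to $0$. A routine construction then produces weights $w_i>0$ with $w_i^2/|\lambda_i|\to\infty$ and $\sup_{h\in K}\sum_i w_i^2 a_i(h)^2<\infty$, and I would take
$$\hat{H}=\Big\{\,u=\sum_i a_i\varphi_i\ :\ \|u\|_{\hat{H}}^2:=\sum_i w_i^2 a_i^2<\infty\,\Big\}.$$
By construction $\hat{H}\hookrightarrow H$ is compact, the orthogonal projections onto $H^{+}$ and $H^{-}$ are norm--one operators on $\hat{H}$, and $K\subset\hat{H}$ with $C_1:=\sup_{h\in K}\|h\|_{\hat{H}}<\infty$.

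\noindent
To conclude, let $z=(u,\lambda)$ be such a flow line and put $g(\sigma)=\lambda(\sigma)\,|L|^{-1}\nabla F(u(\sigma))\in K$. Splitting \eqref{GF} along $H=H^{+}\oplus H^{-}$, the linear part acts as $-\mathrm{Id}$ on $H^{+}$ and $+\mathrm{Id}$ on $H^{-}$, so variation of constants, integrated from $-\infty$ on $H^{+}$ and from $+\infty$ on $H^{-}$, gives
$$u^{+}(t)=\int_{-\infty}^{t}e^{-(t-\sigma)}\big(g(\sigma)\big)^{+}\,d\sigma,\qquad u^{-}(t)=-\int_{t}^{+\infty}e^{\,t-\sigma}\big(g(\sigma)\big)^{-}\,d\sigma,$$
the boundary contributions at $\pm\infty$ dropping out because $\|u^{\pm}(s)\|_H$ stays bounded while the exponential kernels decay. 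Expanding in $\{\varphi_i\}$ and applying Jensen's inequality with respect to the probability measure $e^{-|t-\sigma|}\,d\sigma$ on the relevant half--line,
$$\|u^{\pm}(t)\|_{\hat{H}}^{2}\ \le\ \int e^{-|t-\sigma|}\,\big\| \big(g(\sigma)\big)^{\pm}\big\|_{\hat{H}}^{2}\,d\sigma\ \le\ \sup_{\sigma\in\mathbb{R}}\|g(\sigma)\|_{\hat{H}}^{2}\ \le\ C_1^{2},$$
whence $\|u(t)\|_{\hat{H}}^{2}=\|u^{+}(t)\|_{\hat{H}}^{2}+\|u^{-}(t)\|_{\hat{H}}^{2}\le 2C_1^{2}$ for every $t$ and every flow line in the window $[a,b]$. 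Combined with the bound on $\lambda$, this is exactly boundedness in $\mathcal{\hat{H}}=\hat{H}\times\mathbb{R}$.

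\noindent
The step I expect to be the main obstacle is the passage to the limit in the Duhamel representation --- the convergence of the improper integrals in $H$ and the vanishing of the terms at $\pm\infty$ --- which relies on the exponential dichotomy of the linear part and the uniform $H$--bound on $u^{\pm}$ from the preceding proposition; the weight construction for $\hat{H}$ is elementary but should be written out. It is worth stressing that $\hat{H}$ and all the constants depend only on the window $[a,b]$, which is precisely what is needed for the compactification of the moduli spaces treated next.
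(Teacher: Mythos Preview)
Your argument is correct and follows the same skeleton as the paper's proof: both use the Duhamel representation of $u$ against the fundamental solution of $\frac{d}{dt}+P_{+}-P_{-}$ and then observe that the forcing term $\lambda(t)\,|L|^{-1}\nabla F(u(t))$ already lives, uniformly, in a space compactly embedded in $H$.

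The genuine difference is in the choice of $\hat{H}$. The paper takes the concrete space
$$\hat{H}=\Big\{u:\ \|u\|_{\hat{H}}^{2}=\sum_{i}|\lambda_{i}|^{2}|a_{i}|^{2}<\infty\Big\},$$
i.e.\ the domain of $|L|$, and simply notes that $\nabla F$ maps (bounded subsets of) $H$ into $E$, so that $|L|^{-1}\nabla F(u)\in\hat{H}$ with $\| |L|^{-1}\nabla F(u)\|_{\hat{H}}=\|\nabla F(u)\|_{E}$ bounded. This gives a fixed $\hat{H}$ independent of the energy window $[a,b]$, with essentially no work. Your construction instead builds $\hat{H}$ from the compactness hypothesis $(F1)$ by choosing adapted weights $w_{i}$; this is more laborious and produces a space depending on $[a,b]$, but it is a bit more robust in that it only uses that $|L|^{-1}\nabla F$ is compact rather than the stronger fact that $\nabla F$ lands in $E$. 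For the purposes of this paper the simpler fixed choice suffices, and you may want to replace your weight construction by it.
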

\begin{proof}
We recall again that the space $H$ is characterized by
$$\|u\|_{H}^{2} =  \sum_{i\in \mathbb{Z}} |c_{i}|^{2} |\lambda_{i}|<\infty$$
for any $u\in E$. In a similar way, one can define $\hat{H}$ to be the set of vectors $u\in E$ such that
$$\|u\|_{\hat{H}}^{2} = \sum_{i\in \mathbb{Z}} |c_{i}|^{2} |\lambda_{i}|^{2}<\infty$$
Now let $G$ be the fundamental solution of the operator
$$\frac{d}{dt}+P_{+}-P_{-}$$
where $P_{\pm}$ is the projection on $H^{\pm}$, then from the equation of the flow, we have that
$$u(t)=\int_{-\infty}^{+\infty} G(t-s)\lambda(s) |L|^{-1}\nabla F(u) ds$$
and since $\nabla F$ maps $H$ to $E$, we have that
$$|L|^{-1}\nabla F \in \hat{H}$$
and therefore $u\in \hat{H}$.
\end{proof}

\noindent
From the previous proposition we have that the moduli spaces are modeled on the affine spaces $$\mathcal{Q}^{1}(z_{0},z_{1})=\tilde{z}+C^{1}_0(\mathbb{R},\mathcal{\hat{H}})$$
where $\tilde{z}$ is a flow line between $z_{0}$ and $z_{1}$. We will consider the map
$$ev:\mathcal{M}(z_{0},z_{1})\longmapsto \mathcal{\hat{H}}$$
defined by $ev(z)=z(0)$. This map is onto and hence the set $\mathcal{M}(z_{0},z_{1})$ is precompact.


\subsubsection{Compactification by broken trajectories}

\noindent
Here we recall that the operators
$$T_{i,i+1}:\mathcal{Q}^{1}(z_{i},z_{i+1})\longrightarrow \mathcal{Q}^{0}$$
defined by
$$T_{i,i+1}(z)=\frac{dz}{dt}+\nabla I(z)$$
are Fredholm assuming transversality and
$$\mathcal{M}(z_{i},z_{i+1})=T_{i,i+1}^{-1}(0)$$
These operators then are surjective and hence they admit a right inverse $S_{i,i+1}$. Let
$$z_{01}\in\mathcal{M}(z_{0},z_{1}), \qquad z_{12}\in\mathcal{M}(z_{1},z_{2})$$
We define the function
$$z_{02,T}(t)=\Big(1-\varphi\big(\frac{t}{T}\big)\Big) z_{01}(t+2T)+\varphi\big(\frac{t}{T}\big) z_{12}(t-2T)$$
where $\varphi$ is a non-negative function such that
$$\left\{ \begin{array}{l}
\varphi(t)=0, \qquad t<-1 \\
\\
\varphi(t)=1, \qquad t\geq 1
\end{array}
\right.$$
Now we define the operator
$$A_{T}=R^{+}_{T}\tau_{2T}S_{0,1}\tau_{-2T}R^{+}_{T}+R^{-}_{T}\tau_{2T}S_{1,2}\tau_{-2T}R^{-}_{T}$$
where $\tau$ is the translation operator defined by
$$\tau_{a}f(t)=f(t+a)$$
and $R^{\pm}$ is a pair of smooth functions satisfying
$$(R^{+}_{1})^{2}+(R^{-}_{1})^{2}=1$$
$$R^{+}_{1}(t)=R^{-}_{1}(-t)$$
$$R^{\pm}_{T}=R^{\pm}(\frac{t}{T})$$
and
$$R^{+}_{1}(t)=0, \qquad t\leq -1$$
Then we have that
$$dT_{02}(z_{02,T})\circ A_{T}$$
converges to the identity operator as $T\longrightarrow \infty$. Hence by setting
$$z=z_{02,T}+A_{T}w$$
finding a connecting orbit is equivalent to solve $T_{02}(z)=0$ which can be done using the implicit function theorem in the space $\mathcal{Q}^{0}$. This is of course possible because the operator $T_{02}$ is Fredholm. Notice that this construction can be done transversally to the kernel of the linearized operator by setting
$$z=z_{02,T}+u+A_{T}w$$
where $u$ is an element in the kernel and $w$ is small.\\
In this way we have defined the gluing map by
$$z_{01}\sharp_{T,v}z_{12}=z_{02,T}+u+A_{T}w.$$
Now we can deduce that in fact the set $\mathbb{M}^{a,b}(z_{0},z_{1})$ has compact closure.


\subsection{Construction of the Homology}

\noindent
In this section we will define the different chain complexes and their homologies. We will give an explicit computation later on, under specific assumptions.\\
Let $F$ be a function satisfying $(F1)$ and $(F2)$, and let $I$ denote the related energy functional. For $a<b$ we define the critical sets
$$Crit_{k}^{[a,b]}(I)$$
as the set of critical points of $I$ with energy in the interval $[a,b]$ and relative index $k$.\\
We notice that if $I$ is Morse and satisfies (PS) (which we can always assume as we will see later on), then $Crit_{k}^{[a,b]}(I)$ is always finite. Now we define the chain complex $C_{k}^{[a,b]}(I)$ as the vector space over $\mathbb{Z}_{2}$ generated by $Crit_{k}^{[a,b]}(I)$, for every $k\in \mathbb{Z}$. That is $$C_{k}^{[a,b]}(I)=Crit_{k}^{[a,b]}(I)\otimes \mathbb{Z}_{2}$$
The boundary operator $\partial$ is defined for any $z \in Crit_{k}^{[a,b]}(I)$ by
$$\partial z =\sum_{y\in Crit_{k-1}^{[a,b]}(I)} \Big(\sharp  \mathcal{M}(z,y) mod [2]\Big)y$$
Using the compactness results of the previous subsections we have that
$$\partial^{2}=0$$
and therefore $(C_{*}^{[a,b]}(I),\partial)$ is indeed a chain complex and we will denote it by $$H_{*}^{[a,b]}(I)=H_{*}(C_{*}^{[a,b]}(I),\partial)$$
its homology.


\subsubsection{The $S^1$-equivariant case}

\noindent
Here we will define the equivariant homology for a particular group action, namely the $S^{1}$ action. Hence in this case, we assume $F$ to be $S^{1}$-invariant, that is
$$F(e^{i\theta}u)=F(u), \qquad \theta \in \mathbb{R}$$
We define the critical groups by
$$C_{k}^{[a,b],S^{1}}(I)=\frac{Crit_{k}^{[a,b]}(I)}{S^{1}}\otimes \mathbb{Z}_{2}$$
We notice that this definition makes sense since here the critical points are in fact critical circles, since $I$ is equivariant. Now, by breaking the symmetry perturbing the functional, each
$$z_{k}\in \frac{Crit_{k}^{[a,b]}(I)}{S^{1}}$$
splits into a max and a min respectively
$$z_{k}^{+}\in Crit_{k+1}^{[a,b]}(\tilde{I}), \qquad z_{k}^{-}\in Crit_{k}^{[a,b]}(\tilde{I})$$
where $\tilde{I}$ is the perturbed functional. Hence we define for any
$$z_{k+1}\in \frac{Crit_{k+1}^{[a,b]}(I)}{S^{1}}$$
the boundary operator
$$\partial_{S^{1}} z_{k+1}=\sum_{z_{k}\in \frac{Crit_{k}^{[a,b]}(I)}{S^{1}}} \sharp \Big( \mathcal{M}(z_{k+1}^{+},z_{k}^{+}) mod[2]\Big)z_{k}$$
We see that $\partial_{S^{1}}$ is well defined; now we need to show that indeed it is a boundary operator.
\begin{lemma}
We have
$$\partial_{S^{1}}^{2}=0$$
\end{lemma}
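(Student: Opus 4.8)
The plan is to mimic the classical argument that the $S^1$-equivariant (Morse) boundary operator squares to zero, adapting it to the Rabinowitz-Floer setting of this paper. The key point is that, after breaking the symmetry by a small equivariant-transversal perturbation $\tilde I$, each critical circle $z_k$ produces exactly two critical points $z_k^+, z_k^-$ of $\tilde I$ with $i_{rel}(z_k^+) = i_{rel}(z_k^-)+1$, together with a unique (modulo $2$) gradient flow line inside the old critical circle joining $z_k^+$ to $z_k^-$. Thus $\partial_{S^1} z_{k+1}$ counts the points of $\mathcal{M}(z_{k+1}^+, z_k^+)$, and computing $\partial_{S^1}^2 z_{k+2}$ amounts to counting the boundary points of the compactified one-dimensional moduli spaces $\overline{\mathcal{M}}(z_{k+2}^+, z_k^+)$, which by the compactness and gluing results of Section 3 is a finite union of compact one-manifolds with boundary; hence the count of its boundary is $0$ mod $2$.

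First I would record that the broken-trajectory compactification of Section 3 applies verbatim to $\tilde I$: since $\tilde I$ is a small perturbation of $I$, it still satisfies $(PS)$ (by Proposition 3.2 and continuity of the estimates in Lemma 3.3) and the moduli spaces of $\tilde I$ are still precompact with the same Fredholm/gluing package. So $\overline{\mathcal{M}}(z_{k+2}^+, z_k^+)$, when this moduli space is one-dimensional, is a compact $1$-manifold whose boundary consists precisely of the once-broken trajectories
$$\partial \overline{\mathcal{M}}(z_{k+2}^+, z_k^+) = \bigsqcup_{w} \mathcal{M}(z_{k+2}^+, w) \times \mathcal{M}(w, z_k^+),$$
the union taken over critical points $w$ of $\tilde I$ with $i_{rel}(w) = i_{rel}(z_k^+)+1$.

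The next step — and this is where the $S^1$-specific bookkeeping enters — is to analyze which intermediate critical points $w$ actually contribute. The perturbed critical set is $\{z_j^\pm\}$, and a dimension/index count shows $i_{rel}(w) = i_{rel}(z_k^+)+1$ forces $w \in \{z_{k+1}^+, z_{k+1}^-\}$. One then has to check that the broken trajectories through $z_{k+1}^-$ cancel in pairs (or vanish): the standard mechanism is that a flow line arriving at $z_{k+1}^-$ can be followed, along the short flow segment inside the ex-critical circle, to $z_{k+1}^+$ and then continued; carefully, the contributions involving the ``wrong'' component $z_{k+1}^-$ either are excluded by the very definition of $\partial_{S^1}$ (which only uses the $(+,+)$ moduli spaces) or appear an even number of times. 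Consequently
$$0 = \#\,\partial \overline{\mathcal{M}}(z_{k+2}^+, z_k^+) \equiv \sum_{z_{k+1}} \#\mathcal{M}(z_{k+2}^+, z_{k+1}^+)\cdot \#\mathcal{M}(z_{k+1}^+, z_k^+) \pmod 2,$$
which is exactly the coefficient of $z_k$ in $\partial_{S^1}^2 z_{k+2}$. Summing over the generators gives $\partial_{S^1}^2 = 0$.

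The main obstacle I expect is not the topological count but the \emph{equivariant transversality}: one must produce a perturbation $\tilde I$ that is simultaneously small, breaks the $S^1$-symmetry in the model way (each circle $\to$ one max $+$ one min with the expected index shift), and is Morse–Smale, all while preserving the compactness estimates of Section 3 — in the Rabinowitz-Floer framework this needs the perturbation to keep $(F1)$-type compactness and not to destroy the hyperbolic-plus-compact splitting of the Hessian used in Proposition 2.5. A secondary technical point is handling the one short flow segment inside the ex-critical circle (the ``cascade'' piece) and checking that gluing at $z_{k+1}^\pm$ behaves as in the finite-dimensional Morse–Bott picture; I would invoke the gluing construction of Section 3.1.1, applied to $\tilde I$, to justify that the ends of the $1$-dimensional moduli spaces are exactly the broken configurations described above.
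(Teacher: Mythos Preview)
Your overall strategy---count boundary points of the compactified one-dimensional moduli space $\overline{\mathcal{M}}(z_{k+2}^+,z_k^+)$---matches the paper's, but the crucial step where the ``bad'' broken trajectories cancel is where your argument has a genuine gap.

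First, a bookkeeping error: the intermediate critical points of $\tilde I$ with index $i_{rel}(z_k^+)+1$ are the $z_{k+1}^+$ (maxima of the lower circles) together with the \emph{minima of circles at the same level as the starting one}, i.e.\ the points of type $\overline{z}_{k+2}^-$ (including $z_{k+2}^-$ itself), \emph{not} $z_{k+1}^-$. The point $z_{k+1}^-$ has the wrong index.

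Second, and more seriously, your mechanism for disposing of these minus-type intermediates does not work. You cannot ``exclude by the very definition of $\partial_{S^1}$'': every broken trajectory through an index-$(k+2)$ critical point of $\tilde I$ is a genuine boundary point of the compactified moduli space, regardless of which moduli spaces you chose to define $\partial_{S^1}$ with. And the cascade/gluing heuristic you sketch (follow the internal segment from $z^-$ to $z^+$) is not how the paper argues and would require a separate Morse--Bott gluing analysis you have not set up. The paper's actual cancellation argument is short and specific: (i) for the \emph{same} circle, $\sharp\,\mathcal{M}(z^+,z^-)\equiv 0\pmod 2$ because the perturbed flow on the broken circle produces exactly two trajectories; (ii) for a \emph{different} circle $\overline{z}$ at the same level, if $\sharp\,\mathcal{M}(z^+,\overline{z}^-)\neq 0$ then, by the $S^1$ action, one would also have $\sharp\,\mathcal{M}(z^+,\overline{z}^+)\neq 0$ between two critical points of equal relative index, contradicting Morse--Smale transversality. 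This $S^1$-action-plus-transversality step is the key idea you are missing. The paper then packages the computation by introducing an auxiliary complex $(\overline{C}_*,\overline{\partial})$ on pairs $(z_k^+,z_k^-)$ and showing $\partial_{S^1}=f^{-1}\circ\overline{\partial}\circ f$ for an obvious isomorphism $f$, but this is presentational; the substance is the cancellation above.
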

\begin{proof}
First we define the following chain complex
$$\overline{C}_{k}=\bigoplus_{z_{k}\in \frac{Crit_{k}^{[a,b]}(F_{B})}{S^{1}}} (z_{k}^{+},z_{k}^{-})\otimes \mathbb{Z}_{2}$$
with the following boundary operator
$$\overline{\partial} (z_{k+1}^{+},z_{k+1}^{-}) =\sum_{z_{k}\in \frac{Crit_{k}^{[a,b]}(F_{B})}{S^{1}}} (<z_{k+1}^{+},z_{k}^{+}>z_{k}^{+},<z_{k+1}^{-},z_{k}^{-}>z_{k}^{-})$$
where we put
$$<x,y>=\sharp \Big( \mathcal{M}(x,y) mod[2]\Big)$$
We claim that $\overline{\partial}^{2}=0$. Indeed this follows from the computation of
$$\partial \mathcal{M}(z_{k+1}^{+},z_{k-1}^{+})$$
This later boundary contains two kind of terms: the ones of the form
$$\mathcal{M}(z_{k+1}^{+},\overline{z}_{k+1}^{-}), \qquad \mathcal{M}(\overline{z}_{k+1}^{-},z_{k-1}^{+})$$
and those of the form
$$\mathcal{M}(z_{k+1}^{+},z_{k}^{+}), \qquad \mathcal{M}(z_{k}^{+},z_{k-1}^{+})$$
The second terms are the ones that appear in the formula for $\overline{\partial}^{2}$. So it is enough to show that the terms of the first kind cancel. In order to do that, we notice that
$$\sharp  \mathcal{M}(z_{k+1}^{+},z_{k+1}^{-}) mod [2] =0$$
and if
$$\sharp  \mathcal{M}(z_{k+1}^{+},\overline{z}_{k+1}^{-}) \not =0$$
then by the $S^{1}$ action we have that
$$\sharp  \mathcal{M}(z_{k+1}^{+},\overline{z}_{k+1}^{+}) \not =0$$
which is impossible by transversality. Hence
$$\sharp  \mathcal{M}(z_{k+1}^{+},\overline{z}_{k+1}^{-}) =0$$
and this finishes the proof of the claim that $(\overline{C}_{*},\overline{\partial})$ is a chain complex.\\
In fact this cancelation caused by the $S^{1}$ action is exactly the same as for the $\Delta$ operator in the loop space introduced by Denis and Sullivan in \cite{CS}: this operator satisfies $\Delta^{2}=0$ for the same reason as in our case.\\
Next, we consider the map
$$f_{*}:\overline{C}_{*}\longrightarrow C_{*}^{[a,b],S^{1}}(I)$$
defined by
$$f_{*}((z_{k}^{+},z_{k}^{-})=z_{k}$$
We notice that $f$ is well defined and it is an isomorphism. By the $S^{1}$ action we have that $$<z_{k+1}^{+},z_{k}^{+}>=<z_{k+1}^{-},z_{k}^{-}>$$
and we obtain that
$$\partial_{S^{1}}=f^{-1}\circ \overline{\partial} \circ f$$
Which completes the proof of the lemma.
\end{proof}


\subsubsection{The $\mathbb{Z}_2$-equivariant case}

\noindent
Here we will examine the case in which $F$ is invariant by the $\mathbb{Z}_2$ group action, namely:
$$F(+u)=F(-u)$$
Therefore the related energy $I$ will be even and the critical points in this case will be pairs of the type
$$z_{k}=(\lambda_{k},u_{k}), \qquad \overline{z}_{k}=(\lambda_{k},-u_{k})$$
So we define the chain complex
$$C_{k}^{[a,b],\mathbb{Z}_{2}}(I)=\frac{Crit_{k}^{[a,b]}(I)}{\mathbb{Z}_{2}}\otimes \mathbb{Z}_{2},$$
and the related boundary operator $\partial_{\mathbb{Z}_{2}}$ defined for
$$z_{k+1}\in \frac{Crit_{k+1}^{[a,b]}(I)}{\mathbb{Z}_{2}}$$
by
$$\partial_{\mathbb{Z}_{2}}z_{k+1}=\sum_{z_{k}\in \frac{Crit_{k+1}^{[a,b]}(I)}{\mathbb{Z}_{2}}} (<z_{k+1},z_{k}>+<z_{k+1},\overline{z}_{k}>)z_{k}$$
Since the quotient map
$$f_{*}:Crit_{*}^{[a,b]}(I)\longrightarrow\frac{Crit_{*}^{[a,b]}(I)}{\mathbb{Z}_{2}}$$
extends to a map from $C_{*}^{[a,b]}(I)$ onto $C_{*}^{[a,b],\mathbb{Z}_{2}}(I)$, then the following diagram commutes:

\begin{center}
\begin{tikzpicture}
\matrix (m)[matrix of math nodes, row sep=2.6em, column sep=2.8em, text height=1.5ex, text depth=0.25ex]
{C_{k+1}^{[a,b]}(I) & C_{k}^{[a,b]}(I)\\
C_{k+1}^{[a,b],\mathbb{Z}_{2}}(I) & C_{k}^{[a,b],\mathbb{Z}_{2}}(I)\\};
\path[->,font=\scriptsize,>=angle 90]
(m-1-1) edge node[auto] {$\partial$} (m-1-2)
edge node[auto] {$f_{k+1}$} (m-2-1)
(m-1-2) edge node[auto] {$f_{k}$} (m-2-2)
(m-2-1) edge node[auto] {$\partial_{\mathbb{Z}_2}$} (m-2-2);
\end{tikzpicture}
\end{center}

\noindent
Therefore we have defined indeed a chain complex and we will write the corresponding homology as
$$H_{*}^{[a,b],\mathbb{Z}_{2}}(I)=H_{*}(C_{*}^{[a,b],\mathbb{Z}_{2}}(I),\partial_{\mathbb{Z}_{2}})$$


\section{Stability}

\noindent
In this section we will consider two functions $F_{1}$, $F_{2}$ and we will show that under suitable conditions $$H_{*}(I_1)=H_{*}(I_2)$$
where we called $I_i$ the energy functional related to $F_i$, with $i=1,2$. The proof will be done in the general case and there is absolutely no difference in the equivariant case since all the perturbations can be taken to be equivariant. So, let $\eta$ be a smooth function on $\mathbb{R}$ such that
$$\left\{ \begin{array}{l}
\eta(t)=1, \qquad t\geq 1 \\
\\
\eta(t)=0, \qquad t\leq 0
\end{array}
\right.$$
We set
$$F_{t}=(1-\eta(t))F_{1}+\eta(t)F_{2}$$
and for a fixed $t\in \mathbb{R}$, we will denote by $I_t$ the energy functional related to $F_t$. Now we define the non-autonomous gradient flow by
$$z'(t)=-\nabla I_{t}(z(t)),$$
where $\nabla I_{t}$ is the gradient with respect to $z$ for a fixed $t$. Given $z_{1}$ a critical point of $I_{1}$ and $z_{2}$ a critical point of $I_{2}$, we let $z(t)$ be the flow line from $z_{1}$ to $z_{2}$.
\begin{lemma}
There exists $\delta>0$ such that if
$$|F_{1}-F_{2}|\leq \delta$$
then $z(t)$ is uniformly bounded by a constant depending only on $z_{1}$ and $z_{2}$.
\end{lemma}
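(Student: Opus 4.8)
The plan is to follow the argument of the Proposition on uniform boundedness of flow lines in the autonomous case, the only new feature being that $F_t$, and hence $I_t$, now depends on $t$. The decisive point is that this dependence is confined to the bounded interval $[0,1]$: for $t\le 0$ the equation $z'=-\nabla I_t(z)$ is the negative gradient flow of $I_1$, for $t\ge 1$ that of $I_2$, and
$$\partial_t I_t(u,\lambda)=-\lambda\,\eta'(t)\big(F_2(u)-F_1(u)\big)$$
is supported in $[0,1]$. Hence, differentiating $t\mapsto I_t(z(t))$ and integrating over $\mathbb{R}$ with $z(-\infty)=z_1$, $z(+\infty)=z_2$,
$$E_0:=\int_{\mathbb{R}}\|z'\|^2\,dt=\big(I_1(z_1)-I_2(z_2)\big)-\int_0^1\lambda(t)\,\eta'(t)\big(F_2(u(t))-F_1(u(t))\big)\,dt\le D+c_1\delta\,\Lambda,$$
where $D:=I_1(z_1)-I_2(z_2)$, $c_1:=\|\eta'\|_{L^1}$, $\Lambda:=\sup_{[0,1]}|\lambda|$, and we used $|F_1-F_2|\le\delta$. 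Since $\lambda'=F_t(u)$ is a component of $z'$, on the bounded interval $[0,1]$ we get $\Lambda\le|\lambda(0)|+\int_0^1\|z'\|\,dt\le|\lambda(0)|+\sqrt{E_0}$, so it remains to control $|\lambda(0)|$.

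For that I would use Lemma~\ref{lemmaestimate} for the autonomous functional $I_1$ on $(-\infty,0]$. As $t\to-\infty$ one has $z(t)\to z_1$, a critical point of $I_1$, so the set $\{t\le 0:\|\nabla I_1(z(t))\|\le\varepsilon\}$ (with $\varepsilon$ as in Lemma~\ref{lemmaestimate}) is nonempty; let $t_0$ be its supremum. On $(t_0,0]$ one has $\|\nabla I_1(z(t))\|>\varepsilon$, hence $\varepsilon^2|t_0|\le\int_{t_0}^0\|z'\|^2\,dt\le E_0$. Since $t\mapsto I_1(z(t))$ is nonincreasing on $(-\infty,0]$ with limit $I_1(z_1)$, we have $I_1(z_1)-E_0\le I_1(z(t_0))\le I_1(z_1)$, so $|I_1(z(t_0))|\le|I_1(z_1)|+E_0=:M_0$, while $\|\nabla I_1(z(t_0))\|\le\varepsilon$ by continuity. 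Lemma~\ref{lemmaestimate} then gives $|\lambda(t_0)|\le C(M_0)$ and $\|u(t_0)\|_H\le C(M_0)$, whence $|\lambda(0)|\le C(M_0)+\sqrt{|t_0|}\sqrt{E_0}\le C(M_0)+E_0/\varepsilon$.

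To close the loop I would check, by inspecting the proof of the $(PS)$ condition (where $|\lambda|$ is bounded by roughly $2|I|/a$ plus a term in the $S_1$-bounded quantities $\|u\|_E$ and $\|\nabla F_1\|$), that the constant in Lemma~\ref{lemmaestimate} is affine in the energy level, $C(M)\le c_2+c_3M$. Plugging the previous estimates into the bound for $E_0$ gives, using $\sqrt{E_0}\le 1+E_0$,
$$E_0\le D+c_1\delta\Big[c_2+c_3|I_1(z_1)|+\big(c_3+\tfrac1\varepsilon+1\big)E_0+1\Big],$$
so if $\delta$ is small enough that $c_1\delta\big(c_3+\tfrac1\varepsilon+1\big)<\tfrac12$ we obtain $E_0\le C_\ast$ with $C_\ast$ depending only on $z_1$ and $z_2$. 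With $E_0\le C_\ast$ in hand, an argument as in the autonomous Proposition applies on $(-\infty,0]$ (with $I_1$) and on $[1,+\infty)$ (with $I_2$), since the energy drop on each region is at most $E_0$, bounding $|\lambda(s)|+\|u(s)\|_H$ there; and on $[0,1]$ one has $|\lambda(s)|\le\Lambda$ and $\|u(s)\|_H\le\|u(t_0)\|_H+\sqrt{|t_0|+1}\,\sqrt{E_0}$, all bounded in terms of $z_1,z_2$. Thus $z(t)$ is uniformly bounded in $\mathcal{H}=H\times\mathbb{R}$ as claimed.

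The step I expect to be the real obstacle is the first one: the estimate for $E_0$ is a priori circular, since its right-hand side contains $\Lambda$ and hence $|\lambda(0)|$. This circularity is broken only by combining two facts — that the non-autonomous part of the flow lives on a \emph{bounded} time interval, so $\lambda$ can grow there by at most $\sqrt{E_0}$, and that the a priori bound of Lemma~\ref{lemmaestimate} is affine in the energy level, which lets the $E_0$-terms be absorbed once $\delta$ is small. Verifying this affine dependence, together with the minor points (nonemptiness of the sublevel set defining $t_0$ and attainment of its supremum, monotonicity of $I_1(z(\cdot))$ on $(-\infty,0]$), are the routine parts.
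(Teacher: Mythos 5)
Your proof is correct, and it reaches the bound by a genuinely different bookkeeping than the paper's. The paper works with the non-autonomous flow globally: it defines the waiting time $\tau(s)$ along the whole line, applies Lemma~\ref{lemmaestimate} at the point $z(s+\tau(s))$, gets $|\lambda(s)|\leq C+\big(I_{2}(z_{2})-I_{1}(z_{1})+C\delta\|\lambda\|_{\infty}\big)/\varepsilon$ for every $s$, and then absorbs the $C\delta\|\lambda\|_{\infty}$ term into the left-hand side once $C\delta<\varepsilon$; the bound on $\|u\|_{H}$ follows the same pattern. You instead exploit that the $t$-dependence of $I_{t}$ is confined to $[0,1]$: you first bound the total kinetic energy $E_{0}$ in terms of $\sup_{[0,1]}|\lambda|$, control $\lambda(0)$ by running the purely autonomous $I_{1}$-argument on $(-\infty,0]$, and close the resulting circular estimate by noting that the constant in Lemma~\ref{lemmaestimate} is affine in the energy level, which you correctly extract from the (PS) proof (the bound $|\lambda|\lesssim 2|I|/a$ plus lower-order terms coming from the starshapedness constant $a$ and the $E$-boundedness of $S$); only then do you propagate bounds region by region. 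Both arguments rest on the same two ingredients (the estimate $\varepsilon^{2}\tau\leq$ energy and the a priori bound of Lemma~\ref{lemmaestimate}, plus smallness of $\delta$), so the difference is organizational rather than conceptual. What your version buys is that it makes explicit a point the paper's absorption leaves implicit: the constant furnished by Lemma~\ref{lemmaestimate} depends on the energy window along the flow line, which in the non-autonomous setting itself involves $\delta\|\lambda\|_{\infty}$, so some controlled (e.g.\ affine) dependence of $C(M)$ on $M$ is in fact needed for either absorption to go through; your write-up supplies it. The paper's version is shorter and treats all $s\in\mathbb{R}$ uniformly without splitting into the three regions. (Incidentally, your sign $I_{1}(z_{1})-I_{2}(z_{2})$ in the energy identity is the correct one; the paper's displayed difference is a typo, as is the inequality sign in the definition of $\tau$ in~(\ref{tau}).)
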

\begin{proof}
Here again one needs to worry about the boundedness of $\lambda$ along the flow. First we notice that
$$\frac{\partial I_{t}(z(t))}{\partial t}=-\|z'(t)\|^{2}+\lambda \eta'(t) (F_{1}-F_{2})$$
Therefore, we have
$$I_{t}(z(t))\leq I_{1}(z_{1}) + \delta \int_{0}^{t} \lambda \eta'(s) ds$$
and
$$\int_{-\infty}^{+\infty} \|z'(t)\|^{2}dt \leq I_{2}(z_{2})-I_{1}(z_{1})+ C\delta\int_{0}^{1}|\lambda(t)|dt.$$
Now we define the same function $\tau$ as in (\ref{tau}): we need a bound for this last one. We have
$$\int_{-\infty}^{+\infty} \|\nabla I_{t}(z)(t)\|^{2}dt\leq I_{2}(z_{2})-I_{1}(z_{1})+ C\delta\int_{0}^{1}|\lambda(t)|dt$$
hence
$$\int_{s}^{s+\tau(s)} \|\nabla I_{t}(z)(t)\|^{2}dt \leq I_{2}(z_{2})-I_{1}(z_{1})+ C\delta\int_{0}^{1}|\lambda(t)|dt$$
Thus
$$\varepsilon^{2} \tau(s)\leq I_{2}(z_{2})-I_{1}(z_{1})+C\delta \|\lambda\|_{\infty}$$
Now
$$\lambda(s)=\lambda(s+\tau(s))-\int_{s}^{s+\tau(s)}\lambda'(t)dt$$
and
$$|\lambda(s)| \leq C+\sqrt{\tau(s)}\Big(I_{2}(z_{2})-I_{1}(z_{1})+C\delta \|\lambda\|_{\infty}\Big)^{\frac{1}{2}}$$
This leads to
$$|\lambda(s)|\leq C+\frac{I_{2}(z_{2})-I_{1}(z_{1})+C\delta \|\lambda\|_{\infty}}{\varepsilon}$$
Therefore, for $\delta$ chosen such that $C\delta<\varepsilon$, we have the uniform bound for $\lambda$.\\
Similarly $\|u\|_{H}$ is uniformly bounded. In fact,
$$\|u(s)\|_{H}\leq \|u(s+\tau(s))\|_{H}+\frac{I_{2}(z_{2})-I_{1}(z_{1})+C\delta \|\lambda\|_{\infty}}{\varepsilon}$$
and since $\|u(s+\tau(s))\|_{H}\leq C$ and $\lambda$ is bounded, we have the desired result.
\end{proof}

\noindent
Now, as in the autonomous case, this uniform boundedness implies pre-compactness, therefore we can define the moduli space of trajectories of the non-autonomous gradient flow,
$$\mathbb{M}(z_{1},z_{2})$$
and we omit the similar gluing construction that can be done to compactify it. In fact, we can show that it is a finite dimensional manifold with
$$dim\Big(\mathbb{M}(z_{1},z_{2})\Big)=i_{rel}(z_{1})-i_{rel}(z_{2})$$
Moreover, if
$$i_{rel}(z_{1})-i_{rel}(z_{2})=1$$
we have that
$$\partial \mathbb{M}(z_{1},z_{2})=\bigcup_{x\in Crit_{i_{rel}(z_{2})}(I_{1})} \mathbb{M}(z_{1},x)\times \mathbb{M}(x,z_{2})$$
$$\bigcup_{y\in Crit_{i_{rel}(z_{1})}(I_{2})}\mathbb{M}(z_{1},y)\times \mathbb{M}(y,z_{2})$$
With this in mind we can construct the continuation isomorphism
$$\Phi_{12} : C_{*}(I_{1})\longrightarrow C_{*}(I_{2})$$
defined at the chain level by
$$\Phi_{12}(z)=\sum_{x\in Crit_{i_{rel}(z)}(I_{2})}(\sharp \mathbb{M}(z,x) mod [2])x$$
By the previous remark on the boundary of the moduli space in the non-autonomous case, one sees that
$$\partial_{1} \Phi_{12}+\Phi_{12} \partial_{2}=0$$
this shows that it is a chain homomorphism, hence it descends at the homology level. The last thing to check is that it is an isomorphism, by taking a homotopy of homotopies (see for instance Schwarz \cite{Sh}). Finally we have the following result

\begin{corollary}
Assume that $F_{1}$ and $F_{2}$ satisfy the assumptions $(F1)$ and $(F2)$, such that $F_{1}-F_{2}$ is bounded. Then
$$H_{*}(I_{1})=H_{*}(I_{2})$$
\end{corollary}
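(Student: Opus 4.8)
The plan is to run the non-autonomous continuation machinery set up in this section, but since the hypothesis only gives $F_1-F_2$ \emph{bounded} rather than small, I first reduce to the small-perturbation regime of the Lemma above by a subdivision. Pick $N$ so large that $\tfrac1N\sup|F_1-F_2|\le\delta$, where $\delta$ is the threshold in that Lemma, and set $G_j=\tfrac{N-j}{N}F_1+\tfrac jN F_2$ for $j=0,\dots,N$, so that $G_0=F_1$, $G_N=F_2$, and $G_j-G_{j+1}=\tfrac1N(F_1-F_2)$ has sup-norm $\le\delta$. Each $G_j$ still satisfies $(F1)$, since $|L|^{-1}\nabla G_j$ is (for each fixed $t$) a convex combination of compact maps; for $(F2)$ one arranges the interpolation so that the zero sets stay strictly starshaped bounded hypersurfaces — e.g. by interpolating the radial graph functions of $S_1$ and $S_2$ over a common star-center — which is harmless because, by the Remark, the homology depends only on the surface $S$, and any two strictly starshaped bounded hypersurfaces are joined through such hypersurfaces.

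Next, for each consecutive pair I would carry out the construction already described for $F_t$: form $F^{(j)}_t=(1-\eta(t))G_j+\eta(t)G_{j+1}$, consider the non-autonomous descending gradient flow $z'(t)=-\nabla I^{(j)}_t(z(t))$, and invoke the Lemma — applicable precisely because $\sup|G_j-G_{j+1}|\le\delta$ — to get a uniform bound on $\lambda(t)$ and on $\|u(t)\|_H$ along any flow line from a critical point of $I_{G_j}$ to one of $I_{G_{j+1}}$. As in the autonomous case this uniform boundedness yields precompactness of the moduli spaces $\mathbb M(z,x)$ (the same $\hat H$-regularity and evaluation-map argument of Section~3), and the analogue of the gluing construction compactifies them. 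When $i_{rel}(z)-i_{rel}(x)=1$ the compactified one-dimensional moduli space has the boundary displayed above, whence $\partial_{G_{j+1}}\Phi_{j,j+1}+\Phi_{j,j+1}\partial_{G_j}=0$ over $\mathbb Z_2$; so $\Phi_{j,j+1}\colon C_*(I_{G_j})\to C_*(I_{G_{j+1}})$ is a chain map and descends to homology.

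Then I would show each $\Phi_{j,j+1}$ induces an isomorphism on homology by the standard homotopy-of-homotopies argument, as in Schwarz~\cite{Sh}: the composite of the continuation map $G_j\to G_{j+1}$ with the one $G_{j+1}\to G_j$ is chain-homotopic to the identity, the homotopy being read off the parametrized moduli spaces obtained by deforming the two-step interpolation to the constant one, and all the requisite uniform bounds again follow from the Lemma since the intermediate interpolants stay within $\delta$ of each $G_j$. Composing, $\Phi_{12}:=\Phi_{N-1,N}\circ\cdots\circ\Phi_{0,1}$ is a chain homotopy equivalence, hence induces an isomorphism $H_*(I_1)\to H_*(I_2)$, i.e. $H_*(I_1)=H_*(I_2)$.

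The main obstacle — and the only place where "$F_1-F_2$ bounded" is genuinely used — is the a priori control of the Lagrange multiplier along the non-autonomous flow: the extra term $\lambda\,\eta'(t)(F_1-F_2)$ in $\tfrac{d}{dt}I_t(z(t))$ has no definite sign and feeds back into the energy, so one only gets $\int\|z'\|^2$ and $\int\|\nabla I_t(z)\|^2$ bounded by $I_2(z_2)-I_1(z_1)+C\delta\|\lambda\|_\infty$, and closing this into a genuine bound forces $C\delta<\varepsilon$, which is exactly what the subdivision into $N$ small steps buys (the constant $C$ being uniform over the bounded family of interpolants). Everything else is a routine transcription of the autonomous arguments of Sections~2 and~3 and of the cited references, and the equivariant cases require no change since all the perturbations above can be chosen equivariant.
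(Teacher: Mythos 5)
Your proposal is correct and follows essentially the same route as the paper: subdivide the homotopy from $F_1$ to $F_2$ into finitely many steps of sup-norm at most $\delta$, apply the preceding Lemma and the continuation chain maps $\Phi_{j,j+1}$ (isomorphisms via the homotopy-of-homotopies argument of Schwarz) to each step, and compose. Your extra care about the intermediate interpolants satisfying $(F1)$--$(F2)$ is a detail the paper leaves implicit ("without loss of generality one can assume the existence of a homotopy $F_s$"), but it does not change the argument.
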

\begin{proof}
Without loss of generality one can assume the existence of a homotopy $F_{s}$ for $s\in [0,1]$ and a partition
$$s_0=0<s_{1}<\cdots <s_{k}<1=s_{k+1}$$
such that, for a fixed $\delta>0$
$$|F_{s_{j+1}}-F_{s_{j}}|\leq \delta.$$
Hence there exist continuation isomorphisms
$$\Phi_{j,j+1}: H_{*}(I_{s_{j}})\longrightarrow H_{*}(I_{s_{j+1}}), \qquad j=0,\ldots,k$$
and since there are finitely many of them, one gets an isomorphism between $H_{*}(I_{1})$ and $H_{*}(I_{2})$.
\end{proof}

\noindent
As we said before, the same stability results hold for the equivariant cases.


\section{Transversality}

\noindent
In this section we will show that up to a small and smooth perturbation of $F$ we can always assume that $I$ is Morse. Then, it can be approximated by a Morse-Smale functional with the same critical points and the same connections.

\begin{lemma}
Consider a function $F$ satisfying $(F1)$ and $(F2)$, then for a generic perturbation $K$ in $C_{0}^{3}(E)$, the energy functional $\tilde I$ related to $F+K$ is Morse.
\end{lemma}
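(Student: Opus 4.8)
The plan is to apply the standard Sard–Smale transversality machinery to the critical-point equation \eqref{cpeqs}. Fix $F$ satisfying $(F1)$ and $(F2)$, and for a perturbation $K\in C^3_0(E)$ let $\tilde F=F+K$. Critical points of $\tilde I$ are solutions $(u,\lambda)\in H\times\mathbb{R}$ of $Lu=\lambda\nabla\tilde F(u)$ with $\tilde F(u)=0$; equivalently they are the zeros of the section
$$\Psi:H\times\mathbb{R}\times C^3_0(E)\longrightarrow E\times\mathbb{R},\qquad \Psi(u,\lambda,K)=\big(u-\lambda|L|^{-1}\nabla(F+K)(u),\;(F+K)(u)\big),$$
where I have composed with the compact invertible isomorphism $|L|^{-1}\colon E\to H$ so that the first component lands in $H$ — this is legitimate by $(F1)$ once we check $|L|^{-1}\nabla K$ is also compact, which holds because $\nabla K$ maps $E$ continuously to $E$ (as $K\in C^3_0$ has bounded derivatives of compact support) and $|L|^{-1}$ is compact. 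The first step is to verify that $\Psi$ is a $C^2$ (indeed $C^1$ suffices) map of Banach manifolds and that $0$ is a regular value of $\Psi$, i.e. the full linearization $D\Psi(u,\lambda,K)$ is surjective at every zero.

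The key step is this surjectivity. The $K$-derivative of $\Psi$ at a zero $(u,\lambda,K)$, evaluated on a variation $h\in C^3_0(E)$, is $\big(-\lambda|L|^{-1}\nabla h(u),\,h(u)\big)$. Since $\lambda\neq 0$ at any critical point — this follows from $(F2)$: if $\lambda=0$ then $Lu=0$, hence $u=0$ by invertibility of $L$, but $0$ lies inside the strictly starshaped domain, so $F(u)<0\neq 0$ — the second component $h\mapsto h(u)$ is already surjective onto $\mathbb{R}$ (choose $h$ a bump function with $h(u)\neq 0$), and by independently choosing $h$ vanishing near $u$ together with $h$ supported near $u$ one arranges the pair $\big(-\lambda|L|^{-1}\nabla h(u),h(u)\big)$ to hit a dense (in fact all, using that $|L|^{-1}$ has dense range and one can prescribe $\nabla h(u)$ to be an arbitrary vector while keeping $h(u)$ free) subset. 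Combined with the fact that the $(u,\lambda)$-derivative is Fredholm (its principal part is $\mathrm{Id}$ on $H$ by construction, perturbed by the compact operator $\lambda|L|^{-1}\nabla^2\tilde F(u)$ and the finite-rank coupling to the $\lambda$-direction, exactly as in Lemma above), the total derivative $D\Psi$ is surjective with a complement coming from the $K$-directions. Hence $0$ is a regular value and $\mathcal{Z}=\Psi^{-1}(0)$ is a Banach submanifold.

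Next I would apply the Sard–Smale theorem to the projection $\pi:\mathcal{Z}\to C^3_0(E)$, $\pi(u,\lambda,K)=K$. By the preceding paragraph $\pi$ is a Fredholm map (its linearization at a point of $\mathcal{Z}$ has the same index as the Fredholm operator $D_{(u,\lambda)}\Psi$, which is $0$ since it is a compact perturbation of the identity on $H\times\mathbb{R}$ landing in $E\times\mathbb{R}\cong H\times\mathbb{R}$). Since $C^3_0(E)$ is a separable Banach space, Sard–Smale gives a residual set of regular values $K$; for such $K$, $\pi^{-1}(K)=\{(u,\lambda):\tilde I=I_{F+K}\text{ has critical point }(u,\lambda)\}$ is a $0$-dimensional manifold on which $D_{(u,\lambda)}\Psi$ is onto, which is precisely the statement that the Hessian of $\tilde I$ is nondegenerate at each critical point, i.e. $\tilde I$ is Morse. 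To pass from the formal perturbation space $C^3_0(E)$ to "generic perturbation $K$ in $C^3_0(E)$" as stated, one either works directly in that space (it is separable) or uses the standard Floer trick of a countable dense family of bump functions; I would simply note separability of $C^3_0(E)$ and that a residual subset is what is meant by "generic."

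\textbf{Main obstacle.} The delicate point is the surjectivity onto the first factor: one must be sure that the variations $|L|^{-1}\nabla h(u)$ as $h$ ranges over $C^3_0(E)$ (with prescribed value $h(u)$) span enough of $H$ to complement the possibly-nontrivial cokernel of $D_{(u,\lambda)}\Psi$. Because $|L|^{-1}$ is injective with dense range and $\nabla h(u)$ can be made an arbitrary vector of $E$ by choosing $h$ quadratic near $u$ cut off far away, one can prescribe $|L|^{-1}\nabla h(u)$ to lie in a dense subspace of $H$; since the cokernel is finite-dimensional this density is enough. Making this last density-versus-finite-codimension argument airtight — together with checking that the simultaneous prescription of $h(u)\in\mathbb{R}$ and $\nabla h(u)\in E$ is unobstructed — is the only nonroutine part; everything else is the standard Floer-theoretic transversality scheme already used for \eqref{GF} in the previous sections.
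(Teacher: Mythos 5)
Your proposal is correct and takes essentially the same route as the paper: a parametric Sard--Smale/transversality argument on the universal zero set over $C^{3}_{0}(E)$, with the $K$-derivative $\big(-\lambda|L|^{-1}\nabla G(u),\,-G(u)\big)$ furnishing surjectivity (the $\mathbb{R}$-direction from perturbations with $G(u)\neq 0$, the $H$-direction from the density of $|L|^{-1}(E)$ against the closed finite-codimensional range of the index-zero Fredholm Hessian), and your explicit observation that $\lambda\neq 0$ at critical points is a point the paper uses only implicitly. The only slips are cosmetic and harmless: the first component of your section should be $|L|^{-1}Lu-\lambda|L|^{-1}\nabla(F+K)(u)$, i.e. $u^{+}-u^{-}$ rather than $u$, and $|L|^{-1}\colon E\to H$ is injective with dense range rather than an isomorphism.
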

\begin{proof}
We consider the functional
$$\psi :\mathcal{H}\times C_{0}^{3}(E)  \longrightarrow \mathcal{H}$$
defined by
$$\psi(z,K)=\nabla \tilde I (z)$$
Let us notice first that the inverse image of zero corresponds to critical points of the functional related to $F+K$. Also, for $(z,K)\in \psi^{-1}(0)$ we have
$$\partial_{z} \psi(z,K)v= Hess(\tilde I(z))v,$$
which is a perturbation of a compact operator, and hence it is a Fredholm operator of index zero. Now it remains to show that $\nabla \psi (z,K)$ is surjective. So, let us compute the differential with respect to $K$: $$\partial_{K}\psi(z,K)G=\left(\begin{array}{cc}
-\lambda |L|^{-1}\nabla G (u)\\
-G(u)
\end{array}
\right)
$$
Therefore, by taking first $G$ to be constant, we see that we can span the $\mathbb{R}$-component. For the other component we see that by taking
$$G(u)=\langle f,u \rangle$$
then we have that the range of the first component is dense since $f$ can be any function of $E$ and the operator $|L|^{-1}$ maps $E$ to a dense subspace. Thus we have the surjectivity. Therefore by the transversality theorem, $0$ is a regular point of $\psi(\cdot,K)$ for a generic $K$ and this is equivalent to say that $\tilde I$ is Morse.
\end{proof}

\noindent
Notice also that the perturbation $K$ can be chosen to be $S^{1}$- or $\mathbb{Z}_2$-equivariant if $F$ is so.

\begin{lemma}
Assume that $I$ is Morse and satisfies (PS) in $[a,b]$, then for every $\varepsilon>0$ there exists a functional $I^{\varepsilon}$ such that
\begin{itemize}
\item[(i)] $\|I-I^{\varepsilon}\|_{C^{2}}<\varepsilon$
\item[(ii)] $I^{\varepsilon}$ satisfies (PS) in $[a-\varepsilon,b+\varepsilon]$
\item[(iii)] $I^{\varepsilon}$ has the same critical points than $I$ with the same connections (number of connecting orbits).\\
\end{itemize}
\end{lemma}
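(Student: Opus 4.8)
The plan is to take $I^{\varepsilon}=I+K$, where $K$ is a smooth function, small in the $C^{2}$ norm, with $|L|^{-1}\nabla K$ compact and with support away from the critical set of $I$. Then (i) is immediate; (ii) and the invariance of the critical set will follow from a lower bound on $\|\nabla I\|$ off the critical set; and $K$ will be chosen generically inside this class so as to force the Morse--Smale condition, which is what makes the count of connecting orbits meaningful.

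First I would record that $Crit^{[a,b]}(I)$ is finite: since $I$ is Morse every critical point is nondegenerate, hence isolated, and (PS) (which in our situation holds on a neighborhood of $[a,b]$) forbids accumulation. Write $Crit^{[a,b]}(I)=\{z_{1},\dots,z_{N}\}$ and choose pairwise disjoint neighborhoods $U_{j}\ni z_{j}$ with $\overline{U_{j}}$ compact, $\nabla I\neq 0$ on $\overline{U_{j}}\setminus\{z_{j}\}$, and small enough that $I(\overline{U_{j}})$ lies in a small interval about the critical value $I(z_{j})$. A further use of (PS) yields $\delta>0$ and $\varepsilon_{0}>0$ with $\|\nabla I\|\geq\delta$ on $\{a-\varepsilon_{0}\leq I\leq b+\varepsilon_{0}\}\setminus\bigcup_{j}U_{j}$, since a violating sequence would, by (PS), subconverge to a critical point lying outside every $U_{j}$.

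Now for any admissible $K$ with $\mathrm{supp}\,K$ disjoint from the $\overline{U_{j}}$ and $\|K\|_{C^{2}}<\varepsilon<\delta$, set $I^{\varepsilon}=I+K$. On $\bigcup_{j}U_{j}$ one has $I^{\varepsilon}\equiv I$, so the $z_{j}$ are still critical points of $I^{\varepsilon}$ with the same Hessian and index; on the complement, in the energy window, $\|\nabla I^{\varepsilon}\|\geq\delta-\varepsilon>0$, so no new critical points appear, and the same bound turns a (PS) sequence for $I^{\varepsilon}$ at a level in $[a-\varepsilon,b+\varepsilon]$ into an approximate (PS) sequence for $I$, giving (ii). It remains to pin down $K$ within this class so that $I^{\varepsilon}$ is Morse--Smale, i.e. the Fredholm operators $T_{i,i+1}$ of Section 3 are surjective along all connecting orbits. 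This is the standard parametric transversality argument --- the same Sard--Smale scheme as in the previous lemma, applied now to the section $z\mapsto \dot z+\nabla I^{\varepsilon}(z)$. The point is that along a nonconstant index-difference-one trajectory $I$ is strictly decreasing, so the trajectory assumes a value outside every interval $I(\overline{U_{j}})$ and is therefore regular and disjoint from $\bigcup_{j}\overline{U_{j}}$ at some time --- exactly where $K$ is free to be perturbed --- which gives enough freedom to make the universal linearized operator onto. Sard--Smale then produces a residual set of admissible, arbitrarily $C^{2}$-small perturbations $K$ for which $I^{\varepsilon}$ is Morse--Smale (cf. \cite{AB1}, \cite{Sh}); and $K$ can be kept $S^{1}$- or $\mathbb{Z}_{2}$-equivariant if $I$ is.

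Finally I would establish (iii) by comparing the index-difference-one moduli spaces of $I$ and $I^{\varepsilon}$: a moduli space $\mathcal{M}(z_{0},z_{1})$ that was already transverse is a compact $0$-manifold and, by the implicit function theorem, persists with the same cardinality under a $C^{2}$-small perturbation; and the broken-trajectory compactness of Section 3 confines all such trajectories --- for $I$ and for all its small perturbations --- to one fixed compact subset of $\hat{\mathcal{H}}$ on which $K$ and $\nabla K$ are uniformly $O(\varepsilon)$, so that for $\varepsilon$ small no connecting orbit is created which is not $C^{1}$-close to a (possibly broken) trajectory of $I$, and the mod-$2$ count underlying the boundary operator is unchanged, $\partial^{\varepsilon}=\partial$. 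I expect this last point to be the main obstacle: since a merely Morse functional need not be Morse--Smale, one has to achieve transversality and simultaneously control that the $C^{2}$-small perturbation alters neither the critical data nor the rigid connection counts, and it is precisely the a priori compactness of the moduli spaces proved in Section 3 that makes this control possible.
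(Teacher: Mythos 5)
Your overall scheme --- finiteness of $Crit^{[a,b]}(I)$, a uniform lower bound for $\|\nabla I\|$ outside small neighborhoods of the critical points, a perturbation $K$ supported away from those neighborhoods with $|L|^{-1}\nabla K$ compact, and a Sard--Smale argument giving the Morse--Smale property generically --- is exactly the route the paper has in mind (it omits the proof and refers to \cite{AB2}), and it does deliver (i), (ii) and the invariance of the critical set. One small slip: in the infinite-dimensional $\mathcal{H}$ you cannot choose the $U_{j}$ with compact closure; this is harmless, since the lower bound $\|\nabla I\|\geq\delta$ off $\bigcup_{j}U_{j}$ in the energy window follows from (PS) alone, exactly by the contradiction argument you describe.

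The genuine gap is the last step, the assertion that the counts of rigid trajectories are unchanged, $\partial^{\varepsilon}=\partial$. First, if $I$ is merely Morse the numbers $\sharp\mathcal{M}_{I}(z_{0},z_{1})$ need not be defined at all: the index-difference-one moduli spaces of $I$ may fail to be cut out transversally, so there is nothing for the perturbed counts to equal. Second, ``already transverse trajectories persist'' together with ``every new trajectory is $C^{1}$-close to a broken trajectory of $I$'' does not yield equality of counts: precisely because $I$ is not Morse--Smale there may exist broken configurations $z_{0}\to y\to z_{1}$ with $i_{rel}(z_{0})-i_{rel}(z_{1})=1$ but with the intermediate point $y$ of index $\geq i_{rel}(z_{0})$ or $\leq i_{rel}(z_{1})$ (exactly what Morse--Smale would forbid), and a generic small perturbation can glue such a configuration into a new rigid orbit, changing the mod-$2$ count by one; two different admissible choices of $K$ can even produce different boundary operators. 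What the construction actually needs, and what the cited scheme proves, is weaker: take $I^{\varepsilon}\equiv I$ near the critical set (so the critical points, their Hessians and indices are literally unchanged), obtain the Morse--Smale property for a generic such $K$, and then show that any two such approximations yield chain complexes related by a continuation isomorphism as in Section 4 (cf. \cite{AB2}, \cite{Sh}), so that the resulting homology is independent of the choice; alternatively, the ``same connections'' claim should be restricted to those pairs of critical points whose moduli spaces for $I$ are already transverse. As written, the final equality $\partial^{\varepsilon}=\partial$ is unjustified, and in general false.
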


\noindent
The proof of this result is similar to the one in \cite{AB2} for that it will be omitted.


\section{Some remarks}

\noindent
Here we point out that in fact the most important assumption on $F$, is the hypothesis (F2). Indeed, $F$ needs to have the geometry of a local minimum around $0$. In fact, the homology that we compute is more intrinsic to the surface $S$ since the critical points will be located on the surface $S$.  In fact if $F_{1}$ and $F_{2}$ have two relative starshaped surfaces $S_{1}$ and $S_{2}$ that are bounded and if we use the notation $H_{*}(S_{i})$ instead of $H_{*}(I_{i})$ then $H_{*}(S_{1})=H_{*}(S_{2})$. This can be done using a cut off function $\eta$, that is, if the hypersurfaces $S_{i}$ is located in the ball $B_{A}(0)$ we choose $\eta$ such that
$$\eta(t)=\left\{ \begin{array}{ll}
1 \text{ for } 0\leq t\leq \sqrt{A} \\
0 \text { for } t>2\sqrt{A}
\end{array}
\right.
$$
And thus we consider the modified functional $$F_{\eta}=\eta(\|u\|^{2})F_{2}(u)+(1-\eta(\|u\|^{2}))F_{1}(u).$$ Then  $\{F_{\eta}=0\}=S_{2}$ and $F_{1}-F_{\eta}$ is bounded. Hence if we are interested in critical points in a specific surface $S$, we can disregard the behavior of the defining functional outside a bounded set and make it equal to a reference functional that we know how to compute its homology and this is the main trick in order for us to compute the homology.\\

\noindent
Regarding the meaning of the multiplier $\lambda$ instead, we will consider some examples that show up usually in the applications. First notice that if
$$0<c_{1}\leq \langle\nabla F(u),u\rangle \leq c_{2}$$
for all $u\in S$, then $\lambda$ and the energy $I$ of a critical point have the same growth, indeed if $(\lambda,u)$ is  a critical point, then
$$2\frac{|I(\lambda,u)|}{c_{2}}\leq |\lambda|\leq 2 \frac{|I(\lambda,u)|}{c_{1}}.$$
Now let us suppose that $\nabla F$ is homogeneous of degree $\alpha >0$, on $S$, then any critical point to $I(\lambda,u)$ yields a solution to the problem
\begin{equation}\label{scaling}
Lu=\nabla F(u)
\end{equation}
In fact, if $(\lambda, u)$ is a critical point of $I$, then $\lambda^{\frac{1}{\alpha-1}}u$ is a solution of (\ref{scaling}).\\
If we consider, on the other hand the problem of periodic solutions for a Hamiltonian system of the form
\begin{equation}
i\frac{\partial}{\partial t} z =\lambda \nabla H(z)
\end{equation}
Then for every $1$-periodic solutions that we get as a critical point of $I$, one has a $\lambda$-periodic solution for the system
\begin{equation}\label{per}
i\frac{\partial}{\partial t} z = \nabla H(z)
\end{equation}
Indeed if $(\lambda,z)$ is a critical point of $I$, then $z(\frac{1}{\lambda}\cdot)$ is a solution to (\ref{per}). Therefore we obtain a sequence of solutions with periods $\frac{1}{\lambda_{k}}$ and from the remark on the link between the energy and $\lambda$, we see that we have a sequence of periodic solutions with period going to infinity.


\section{Computation of the Different Homologies}

\noindent


\subsection{The general case}

\noindent
Let us first compute the homology for the linear case, that is $H_{*}(I_0)$ where $I_0$ is the energy related to
$$F_{0}(u)=\frac{1}{2}(\|u\|^{2}-1)$$
In this case we know that the critical points of $I_0$ are the couples of eigenfunctions, eigenvalues; moreover for each eigenvalue $\lambda_{k}$ there is a circle of eigenfunctions $e^{i\theta}\psi_{k}$. By a symmetry breaking argument we can break the circle to a min and a max as we did in the construction of the homology in section 4. Therefore each critical circle of index $i_{0}$ will be broken to two critical points of index $i_{0}$ and $i_{0}+1$. Also notice that the critical circles have even index hence we have one generator for each index in the chain complex, that is $$C_{k}^{[-K,K]}=\mathbb{Z}_{2},$$ for all $k\in \mathbb{Z}$ such that $\lambda_{k}\in[-K,K]$. \\
Now let us compute the $\partial$ operator. First, by construction $\partial =0$ from odd index to even index: so it remains to compute it from even index to odd index, that is from min to max.\\
Let $(\lambda(t),u(t))$ be a flow line such that $$(\lambda(+\infty),u(+\infty))=(\lambda_{k},u_{k})$$ and $$(\lambda(-\infty),u(-\infty))=(\lambda_{k+1},u_{k+1}),$$
so if we write $u(t)$ in the basis $\varphi_{i}$ we get, if $u(t)=\sum_{i\in \mathbb{Z}}a_{i}(t)\varphi_{i}$,
$$a_{i}'=\frac{(\lambda_{i}-\lambda(t))}{|\lambda_{i}|}a_{i}.$$
Therefore we can write $a_{i}(t)=a_{i}(0)\exp(\int_{0}^{t}\frac{(\lambda_{i}-\lambda(s))}{|\lambda_{i}|}ds)$. Using the convergence of $\lambda(t)$ at infinity we get that $a_{i}=0$ for $i\not \in \{k,k+1\}$ and we have (transversally to the $S^{1}$ action) exactly one flow line from one generator to the generator of the next index. Therefore the flow is in fact a finite dimensional one. As in the finite dimensional case we get then that $\partial $ is an isomorphism from even to odd.
Therefore, one gets $H_{*}(I_0)=0$.\\


\subsection{The equivariant cases}

\noindent
Let us consider again the linear case. The chain complex in this case is generated by a $\mathbb{Z}_{2}$ for each even index and the boundary operator is zero since we have a gap of 2 in the indices. Thus we have $H_{*}^{S^{1}}(I_{0})=\mathbb{Z}_{2}$ if $*$ is even and $H_{*}^{S^{1}}(I_{0})=0$ otherwise.\\
To conclude for the general case, notice that the deformations that we construct in the previous case preserves the $S^{1}$ action if we start with an $S^{1}$-equivariant $I$. Therefore $H_{*}^{S^{1}}(I)=H_{*}^{S^{1}}(I_{0})$.\\
A Similar computation in the case of the $\mathbb{Z}_{2}$ yields that $H_{*}^{\mathbb{Z}_{2}}(I)=\mathbb{Z}_{2}$.

\section{Applications}

\noindent
In this section we will present some examples of PDEs and infinite dimensional dynamical systems for which one can apply our previous results to get existence and multiplicity of solutions. We observe that we will consider examples in which the relevant operator $L$ has unbounded spectrum from above and below, which is the interesting case for our methods; however our results apply to operator such as laplacian, bilaplacian or sublaplacian as well, giving rise to the usual Morse homology: anyway, since we are looking for existence and multiplicity of solutions, we address the reader to the papers \cite{Hir}, \cite{Del2}, \cite{ding 1986}, \cite{ioalinonhbi}, \cite{ioalisuperbi}, \cite{saintier 2006}, \cite{ioalikohn}, \cite{ioaliangela}, \cite{ioalichang}, \cite{ioaligiuliochang} and the reference therein, for other kind of methods to obtain different type of existence and multiplicity results.\\


\subsection{The non-linear Dirac equation}

\noindent
We start with the first application of our results. This is the case of the Dirac operator: indeed the investigation started in \cite{M} led to the present generalization. We bring the readers attention to small technical improvements we did with respect to the previously cited paper.\\
Consider a compact spin manifold $(M,g,\Sigma_{g})$, and for $1<p<\frac{n+1}{n-1}$ we propose to solve the problem
\begin{equation}\label{D}
D_{g}u=h(x)|u|^{p-1}u,
\end{equation}
for $h\geq c>0$. Then using the Rabinowitz-Floer homology, as in \cite{M}, we have that problem (\ref{D}) has an infinite sequence of solution, with energy going to infinity. Indeed this problem falls into the class of operators for which we can use the methods we introduced in this paper.\\

\noindent
We give here for the sake of completeness the different quantities we used. First, we define the functional $F:H^{\frac{1}{2}}(\Sigma M)\to \mathbb{R}$ as
$$F(u)=\frac{1}{p+1}\int_{M}\left(h(x)|u|^{p+1}(x)-1\right)dx$$
and
$$I(u,\lambda)=\frac{1}{2}\int_{M}\langle Du, u \rangle dx -\lambda F(u)$$
Since $p<\frac{n+1}{n-1}$ then assumption $(F1)$ holds, now it is easy to see that assumption $(F_{2})$ is satisfied from the structure of the nonlinearity $F$. Indeed,
$$\langle \nabla F(u),u \rangle =\int_{M} h(x)|u|^{p+1}(x) dx = 1, \text{ for } u \in S.$$
We observe that a similar result can be found in the work of T.Isobe \cite{Iso1}, where the author proves the existence of solutions using a topological linking argument.


\subsection{Systems of Elliptic equations}

\noindent
Let $\Omega \subset \mathbb{R}^{n}$ be a bounded domain and let us define the following elliptic system:
\begin{equation}
\left\{ \begin{array}{lll}
-\Delta u =H_{v}(u,v) \text{ in } \Omega\\
-\Delta v=H_{u}(u,v) \text{ in } \Omega \\
u_{|\partial \Omega}=v_{|\partial \Omega}=0
\end{array}
\right.
\end{equation}
where $H$ is a smooth even function. This problem has been studied by many authors, we cite here some works and the references therein, \cite{An}, \cite{Fig}.\\

\noindent
We consider a typical non-linearity of type:
$$H(u,v)=\frac{1}{p+1}|u|^{p+1}+\frac{1}{q+1}|v|^{q+1}$$
for $\frac{1}{p+1}+\frac{1}{q+1}>\frac{n-2}{n}$.
The natural space on which we can define our functions is the space
$$\mathcal{H}=H^{1}_{0}(\Omega)\times H^{1}_{0}(\Omega) \times \mathbb{R}.$$
Then one can construct the Rabinowitz functional related to this problem in this way:
$$I(u,v,\lambda)=\int_{\Omega}\nabla u \cdot \nabla v dx-\lambda \int_{\Omega}(H(u,v)-1)dx$$
We notice here that, thinking of the first part of this paper, we have that
$$L=\left(\begin{array}{cc}
0&-\Delta \\
-\Delta & 0
\end{array}
\right)$$
and this operator have a discrete unbounded spectrum from above and below. Similarly
$$F(u,v)=\int_{\Omega} \left( \frac{1}{p+1}|u|^{p+1}+\frac{1}{q+1}|v|^{q+1} -1 \right) dx.$$
Here the level set $\{F=0\}$ bounds again a spherical domain so in particular it is strictly starshaped and thus (F2) holds; moreover from the restriction on $p$ and $q$ we have that assumption $(F1)$ holds as well. Therefore this system admits infinitely many solutions with energy going to infinity.\\


\subsection{An infinite dimensional Dynamical system}

\noindent
We consider a bounded domain $\Omega \subset \mathbb{R}^{n}$ and we propose to find periodic solutions to the following infinite dimensional dynamical system:
\begin{equation}
\left\{ \begin{array}{ll}
\frac{\partial}{\partial t}u-\Delta u =H_{v}(u,v) \text{ in } \Omega\\
-\frac{\partial}{\partial t}v-\Delta v=H_{u}(u,v) \text{ in } \Omega\\
u_{|\partial \Omega}=v_{|\partial \Omega} =0
\end{array}
\right.
\end{equation}

\noindent
Again, this type of problems has been deeply investigated: we cite for example the works \cite{Bar1}, \cite{Bar2} and the references therein.\\

\noindent
Here we consider a typical non-linearity of the form
$$H(u,v)=\frac{1}{p+1}|u|^{p+1} + \frac{1}{q+1}|v|^{q+1}$$
for $1<p,q<\frac{n+2}{n}$. We have in this case, the operator
$$L=\left(\begin{array}{cc}
0&\frac{\partial}{\partial t}-\Delta \\
-\frac{\partial}{\partial t}-\Delta & 0
\end{array}
\right)$$
This is an unbounded operator on $L^{2}(S^{1}\times \Omega)$ and it is auto-adjoint with spectrum
$$\sigma(L)=\{\pm \sqrt{j^{2}+\lambda_{k}^{2}}; j\in \mathbb{Z},\lambda_{k}\in \sigma(-\Delta), k\in \mathbb{N}\}$$
The corresponding eigenfunctions, in complex notations, are of the form $$\psi_{j,k}=e^{ijt}\varphi_{k}(x)$$
where the $\varphi_{k}$ are eigenfunctions of the Laplace operator on $\Omega$. The natural space of functions to consider is then
$$H=H^{\frac{1}{2}}(S^{1},H^{1}_{0}(\Omega))$$
$$=\{u=\sum_{k,j}u_{k,j}\psi_{k,j} \in L^{2}(S^{1}\times \Omega); \sum_{k,j}|j^{2}+\lambda_{k}^{2}|^{\frac{1}{4}}u_{k,j}\psi_{k,j}\in L^{2}(S^{1}\times \Omega)\}$$
This space compactly embeds in $L^{p}$ for $1<p<\frac{n+2}{n}$. We define then
$$F(u,v)=\int_{\Omega\times S^{1}} \left( H(u,v)-1 \right)dxdt$$
In the same way as the previous example, we see that $(F1)-(F2)$ are satisfied. Therefore the system has a sequence of periodic solutions with energy going to infinity. \\
Notice that in particular, this allows us to find periodic solutions to the beam equation by taking $p=1$ in the previous function $H$.


\subsection{The wave equation}

\noindent
We consider the following one dimensional wave equation:
\begin{equation}\label{L}
\left\{\begin{array}{ll}
u_{tt}-u_{xx}=f(x,u)\\
u(t+2\pi,x)=u(t,x+2\pi)=u(t,x)
\end{array}
\right.
\end{equation}
This problem was also deeply studied, we cite here only some papers: \cite{Br}, \cite{Rab2}, \cite{Tan}.\\
\noindent
We propose to find periodic solutions in time and space. The case of the wave equation presents a difference with respect to the previous ones: in fact, the linearized operator in not Fredholm since it has an infinite dimensional kernel, hence our theorem fails to apply directly. However, under some assumptions on the non-linearity we can overcome this lack of Fredholm. Again we will consider the example of a typical nonlinearity, that is for $p>1$,
$$f(u)=|u|^{p-1}u$$
First, we define the operator $L$ as follows
$$\langle Lu,v \rangle=\int_{S^{1}\times S^{1}}u_{x}v_{x}-u_{t}v_{t} dtdx$$
Then $L$ is an unbounded auto-adjoint operator on $L^{2}(S^{1}\times S^{1})$. The spectrum of $L$ is
$$\sigma(L)=\{j^{2}-k^{2};j,k \in \mathbb{N}\},$$
with eigenfunctions in complex notation of the form
$$\varphi_{i,j}(t,x)=e^{ikt}e^{ijx}.$$
The natural functional space to be considered is then $H^{1}(S^{1}\times S^{1})$. The kernel of $L$, denoted by $N$ is
$$N=\overline{span\{\varphi_{j,j}\}_{j\in \mathbb{N}}},$$
We then adopt the usual splitting of $H^{1}(S^{1}\times S^{1})$ as follows:
$$H^{1}(S^{1}\times S^{1})=H^{+}\oplus N \oplus H^{-}$$
Notice here that the kernel $N$ is an infinite dimensional space, which makes the operator $L$ not Fredholm. In order to overcome this difficulty one additional step is needed.\\
We first consider, for $u\in H=H^{+}\oplus H^{-}$, the functional $K_{u} :N\to \mathbb{R}$ defined by
$$K_{u}(v)=\frac{1}{p+1}\|u+v\|_{p+1}^{p+1}.$$
One can show that for every $u$, $K_{u}$ achieves its minimum in a function $v(u)\in N$ (see \cite{Br}, \cite{Rab2}, \cite{Tan}). We define then $Q: H \to \mathbb{R}$ by
$$Q(u)=K_{u}(v(u))=\frac{1}{p+1}\|u+v(u)\|^{p+1}_{p+1}.$$
It holds that $Q$ is compact, $C^{1}$ and since $v(u)$ is a minimum for $K_{u}$ we have that
$$\langle Q'(u),h \rangle=\int_{S^{1}\times S^{1}}|u+v(u)|^{p-1}(u+v(u))h dtdx$$
for all $h\in H$. On the other hand, using again the fact that $v(u)$ is a minimum for $K_{u}$, we get
$$\int_{S^{1}\times S^{1}}|u+v(u)|^{p-1}(u+v(u))h dtdx=0$$
for all $h\in N$. In particular
$$\int_{S^{1}\times S^{1}}|u+v(u)|^{p-1}(u+v(u))v(u) dtdx=0.$$
Now, we are able to define our Rabinowitz functional $I:\mathcal{H}=H\times \mathbb{R}\to \mathbb{R}$ by
$$I(u,\lambda)=\langle Lu,u \rangle-\lambda \left(Q(u)-1\right)$$
We are then in the setting of our paper. Since the non-linearity is polynomial then $(F1)$ holds and then we only need to verify the assumption $(F2)$. So,
$$\langle Q'(u),u \rangle=\int_{S^{1}\times S^{1}} |u+v(u)|^{p-1}(u+v(u))udxdt=$$
$$=\int_{S^{1}\times S^{1}} |u+v(u)|^{p+1}dxdt- \int_{S^{1}\times S^{1}} |u+v(u)|^{p-1}(u+v(u))v(u)dxdt$$
But, as we pointed out earlier we have
$$\int_{S^{1}\times S^{1}} |u+v(u)|^{p-1}(u+v(u))v(u)dxdt=0$$
Hence, $\langle Q'(u),u \rangle \geq c>0$ for $u\in S$ and (F2) holds true. Therefore $I$ has an infinite sequence of critical points. It is then straightforward to see that critical points of $I$ are indeed solutions of problem (\ref{L}).\\

\noindent
The same procedure can be carried out for the non-linear Schr\"{o}dinger equation of the form
\begin{equation}
\left\{\begin{array}{ll}
-i\frac{\partial }{\partial t}u=- u_{xx} +|u|^{p-1}u\\
u(t+2\pi,x)=u(t,x+2\pi)=u(x,t)
\end{array}
\right.
\end{equation}
to get the existence of an infinite sequence of solutions.\\



\end{document}